\theoremstyle{plain}
\newtheorem{lem}{Lemma}[subsection]
\newtheorem{prop}[lem]{Proposition}
\newtheorem{thm}[lem]{Theorem}
\newtheorem{cor}[lem]{Corollary}
\newcommand{\newheaderthm}[1]{
\newtheorem*{__headertheorem__#1}{\cref{#1}}
}
\newcommand{\startheaderthm}[1]{
\begin{__headertheorem__#1}
}
\newcommand{\finishheaderthm}[1]{
\end{__headertheorem__#1}
}
\newcommand{\headerthmfooter}{}
\newenvironment{headerthm}[1]
{
\newheaderthm{#1}
\startheaderthm{#1}
\renewcommand{\headerthmfooter}{\finishheaderthm{#1}}
}
{
\headerthmfooter
}
\newtheorem*{thm*}{Theorem}
\newtheorem*{lem*}{Lemma}
\newtheorem*{prop*}{Proposition}
\newtheorem*{cor*}{Corollary}
\theoremstyle{definition}
\newtheorem{defn}[lem]{Definition}
\newtheorem*{remark*}{Remark}
\newtheorem{remark}[lem]{Remark}
\newtheorem{ex}[lem]{Example}
\newtheorem*{question*}{Question}
\newcommand{\ZZ}{\mathbb{Z}}
\newcommand{\QQ}{\mathbb{Q}}
\newcommand{\NN}{\mathbb{N}}
\newcommand{\CC}{\mathbb{C}}
\newcommand{\UV}{\mathscr{U}}
\newcommand{\PhiL}{\Phi^\mathsf{L}}
\newcommand{\PhiR}{\Phi^\mathsf{R}}
\newcommand{\cd}[1]{\mbox{}^\theta{#1}^\vee}
\newcommand{\cdd}[1]{\mbox{}^{\theta\theta}{#1}^{\vee\vee}}
\newcommand{\ostar}{\mathbin{\mathpalette\make@circled\star}}
\newcommand{\make@circled}[2]{%
  \ooalign{$\m@th#1\smallbigcirc{#1}$\cr\hidewidth$\m@th#1#2$\hidewidth\cr}%
}
\newcommand{\smallbigcirc}[1]{%
  \vcenter{\hbox{\scalebox{0.7}{$\m@th#1\bigcirc$}}}%
}
\newcommand{\Hom}{\mathrm{Hom}}
\newcommand{\res}{\mathrm{res}}
\newcommand{\id}{\mathrm{id}}
\newcommand{\Ac}{\mathfrak{A}}
\newcommand{\Bc}{\mathcal{Z}}
\newcommand{\Aa}{{\mathsf{A}}}
\newcommand{\Ba}{{\mathsf{Z}}}
\newcommand{\Pa}{{\mathsf{P}}}
\newcommand{\Ma}{{\mathsf{S}}}
\newcommand{\Maa}{{\mathsf{M}}}
\newcommand{\Ia}{{\mathsf{I}}}
\newcommand{\xx}{\mathfrak{a}}
\newcommand{\yy}{\mathfrak{b}}
\newcommand{\ee}{\mathfrak{e}}
\newcommand{\Mat}{\mathrm{Mat}}
\newcommand{\prt}{\mathcal{p}}
\newcommand{\PRT}{\mathcal{P}}
\newcommand{\hc}{\mathfrak{h}}
\newcommand{\whc}{\widehat{\hc}}
\newcommand{\kk}{\mathbbm{k}}
\newcommand{\vsigma}{\vec{\sigma}}
\newcommand{\vtau}{\vec{\tau}}
\newcommand{\MAP}{\operatorname{\begin{tikzpicture}
\pgfmathsetmacro{\ct}{0.11} 
\pgfmathsetmacro{\cc}{\ct*sin(18)/sin(126)} 
\draw[thin] (0,0)
    +(90-0*36:\ct) coordinate(T1)
    foreach[evaluate=\x as \nc using int((\x+1)/2),   
            evaluate=\x as \nt using int((\x+1)/2+1)] 
        \x in {1,3,...,9}{
        -- +(90-\x*36:\cc) coordinate(C\nc) -- +({90-(\x+1)*36}:\ct) coordinate(T\nt)}
    -- cycle;
\end{tikzpicture}}}
\newcommand{\starout}{\stackMath\mathbin{\stackinset{c}{0ex}{c}{0ex}{\star}{\square}}}
\newcommand{\Otimes}[1]{\, \underset{\scriptscriptstyle{#1}}{\otimes} \,}
\newcommand{\til}[1]{\widetilde{#1}}
\newcommand{\End}{\operatorname{End}}
\newcommand{\im}{\operatorname{im}}
\newcommand{\D}[1]{{\psi}^\vee_{#1}}
\newcommand{\DD}[1]{{\psi}_{#1}}
\newcommand\opteq[1]{\mathrel{\mathpalette\opt@eq{#1}}}
\newcommand{\opt@eq}[2]{%
  \begingroup
  \sbox\z@{$#1#2$}%
  \sbox\tw@{\resizebox{!}{.5\ht\z@}{$\m@th#1($}}%
  \nonscript\hskip-\wd\tw@
  \mkern1mu
  \raisebox{-.35\ht\z@}[0pt][0pt]{\resizebox{!}{.5\ht\z@}{$\m@th#1($}}%
  \mkern-1mu
  {#2}%
  \mkern-1mu
  \raisebox{-.35\ht\z@}[0pt][0pt]{\resizebox{!}{.5\ht\z@}{$\m@th#1)$}}%
  \mkern1mu
  \nonscript\hskip-\wd\tw@
  \endgroup
}
\begin{document}

\title[]{Morita equivalences for Zhu's algebra
}

\subjclass[2020]{17B69 (primary), 81R10, 16D90 (secondary)} \keywords{vertex algebras, Zhu algebra, Morita equivalence, rationality}

\begin{abstract} Through the introduction of new ideals, and with the assistance of the $d$-th mode transition algebras $\Ac_d$, for $d\in \NN$, we show how Zhu's associative algebra $\Aa$, conventionally valued for tracking information about the degree $0$ part of an $\NN$-graded module over a vertex operator algebra $V$, also contains information about components of higher degree.   As an application, equivalent conditions are given for rationality of $V$, and explicit presentations for higher-level Zhu algebras are given, including for a large class of non-rational VOAs.
 \end{abstract}

{
\author[C.~Damiolini]{Chiara Damiolini}
\address{Chiara Damiolini \newline \indent  Department of Mathematics, University of Texas at Austin,  Austin, TX  78712}
\email{chiara.damiolini@austin.utexas.edu}}
{
\author[A.~Gibney]{Angela Gibney}
\address{Angela Gibney \newline  \indent  Department of Mathematics, University of Pennsylvania,  Phil, PA 08904}
\email{agibney@math.upenn.edu}}
{
\author[D.~Krashen]{Daniel Krashen}
\address{Daniel Krashen \newline  \indent  Department of Mathematics, University of Pennsylvania,  Phil, PA 08904}
\email{dkrashen@math.upenn.edu}}

\maketitle

\vspace{-7mm}

\section*{Introduction}

Categories of modules over vertex operator algebras (VOAs) are of broad interest. VOAs with semisimple representation theory relate to rational conformal field theories and logarithmic CFTs if there are reducible and indecomposable modules \cite{CreutzigRidout}. Admissible modules (which are $\NN$-graded) over VOAs of CFT-type, together with (stable) pointed curves with coordinates define vector spaces of coinvarints and dual spaces of conformal blocks \cite{tuy, bfm, bzf, NT, DGT1}. 

The $d$-th mode transition algebra $\Ac_d$, defined in \cite[Definition 3.2.6]{DGK2}, acts on the degree $d$ component of an $\NN$-graded module, and one has $\Ac_0 = \Aa$. Here, we introduce ideals $\Ba_d$ of $\Aa$ (\cref{def:ZZ} and \cref{lem:zigzag-product}), which capture information about degree $d$ components of modules, as reflected in their degree $0$ components. The $\Ba_d$ and $\Ac_d$ are closely related, and our main result  shows that, under suitable hypotheses, they are unital algebras which are Morita equivalent:

\begin{headerthm}{thm:Acd_Bad}
  If $\Ac_d$ is \emph{strongly unital} and $\Ba_d$ is unital, there is an equivalence of categories between $\Ac_d$-modules and 
   $\Ba_d$-modules. 
\end{headerthm}

Motivated by the resemblance to the Peirce decomposition of an associative algebra, we show that $\Aa$, the $\Ac_d$, and the  $\Bc_d$ may all be considered from the broader perspective of Peirce algebras (introduced here in \cref{Peirce}), and prove  \cref{thm:Acd_Bad} in this more general context.

As in \cite{DGK2} (see also \cref{def:strong-id}),  a unit element in $\Ac_d$ is called a strong identity if it acts on the degree $d$ component of any induced $V$-module as the identity.  The $d$-th mode transition algebra $\Ac_d$ is called strongly unital if it admits a strong identity element. By \cite[Remark 3.4.6]{DGK2}, for rational $V$, the $\Ac_d$ are {strongly unital} for all $d$. 

By \cite[Theorem 2.2.3]{ZhuMod}, if $V$ is rational, then $\Aa$ is  semi-simple.  
Naturally, one would like to know what is needed, beyond the semisimplicity of $\Aa$, to guarantee that $V$ is rational.   As an application of \cref{thm:Acd_Bad}, we derive several equivalent such conditions in \cref{thm:equivalences}.  To state it, we write $\PhiL$ for the induction or generalized Verma module functor (\eqref{eq:PhiLDef} in \cref{sec:Background}). We also recall that for any left $\Aa$ module $\Maa$, the linear dual $\Maa^\vee$, a priori a right $\Aa$-module, may be given a left $\Aa$-module structure via an involution $\theta$ (see e.g.~ \cite[\S 3.4.1]{DGK2}) that we denote by $\mbox{}^\theta (\Maa^\vee)$.

\begin{headerthm}{thm:equivalences} 
The following conditions are equivalent for $V$ with semisimple Zhu algebra $\Aa$.
\begin{enumerate}
\item  For any simple $\Aa$-module $\Maa$, the generalized Verma module $\PhiL(\Maa)$ is simple and ordinary.
\item  For any simple $\Aa$-module $\Maa$, the natural map 
 $\PhiL(\cd{\Maa}) \to (\PhiL(\Maa))'$ is an isomorphism.
 \item  $\Ac_d$ is strongly unital for every $d \in \NN$.
\item  $V$ is rational.
\end{enumerate}
\end{headerthm}

In parts \ref{thm:a} and \ref{thm:b} of \cref{thm:equivalences}, the hypotheses involve all simple $\Aa$-modules at once. One may ask what can be said for a single module $\Maa$ without assuming  $\Aa$ is semisimple. In \cref{double dual simplicity} we show that if the canonical map $\DD\Maa: \PhiL({\Maa}) \to (\PhiL(\cd\Maa))'$ is an isomorphism, then it follows that $\Maa$ is finite dimensional and $\PhiL(\Maa)$ is simple (and in fact, so are $\PhiL(\cd\Maa)$ and $\PhiL(\cd\Maa)'$). If in addition, the map $\D\Maa: \PhiL(\cd{\Maa}) \to (\PhiL(\Maa))'$ is an isomorphism, 
then in \cref{prop:want modules2} we show that $\PhiL(\Maa)$ 
and $\PhiL(\cd\Maa)$ are ordinary. 
Conversely, we show also in \cref{prop:want modules2} that if both $\PhiL(\Maa)$ and 
$\PhiL(\cd\Maa)$ are simple and ordinary, then $\D\Maa$ and $\DD\Maa$ are isomorphisms.

    Determining whether or not a particular class of VOAs is rational has been a longstanding and enduring theme represented by many highly cited papers in the VOA literature, including (but not limited to), ~\cite{VirRat, DMZ, DongLiMasonRegular, AbeLatticeRationality, ArakawaInvent, AMTriplet, DongZhang2009, ArakawaWPN, OnOrbifold, DongRen}.  The statements of \cref{thm:equivalences} are related to recent work, including some results that  rely on technically demanding theory. For instance, one approach is to deduce rationality of $V$ from the properties of the tensor category of $V$-representations. Using these methods, based on \cite{HLZ}, McRae shows that if $V$ is $\NN$-graded, simple, self-contragredient,  and $C_2$-cofinite, and if $\Aa$ is semi-simple, then $V$ is rational \cite{Mcrae}.  In another recent work, \cite{MiyamotoRational}, Miyamoto has proved that if $V$ is a simple, holomorphic, and unitary vertex operator algebra of CFT-type, then $V$ is rational.

Essential to our proof, as explained in \cref{semisimple is finite}, is that for $V$ of CFT-type, if $\Aa$ is semisimple, it is also finite-dimensional. 
    That condition \ref{thm:a} implies condition \ref{thm:b} of \cref{thm:equivalences} is a generalization of \cite[Proposition 7.2.1]{NT}, while \ref{it:rtl} implies \ref{it:stun} follows from  \cite[Remark 3.4.6]{DGK2}.  
    Moreover, in \cite{NT} (\cite[page 395-396]{NT}), the authors state that they wish to find hypotheses that do not assume rationality, which nevertheless results in good conformal field theories. In particular, they introduce what they call ``Condition III'' which asserts that $V$ is $C_2$-cofinite, $\Aa$ is semisimple, and that the induction functor $\PhiL$ takes irreducible $\Aa$-modules to simple $V$-modules. But by  \cref{thm:equivalences}, \ref{thm:a},  we see that Condition III ensures the rationality of $V$.

As next explained,  we show that as consequences of these Morita equivalences, under appropriate assumptions, the higher level Zhu algebras $\Aa_d$ can be described as products of matrices of dimensions determined by the VOA itself. First, perhaps unsurprisingly, when the VOA is rational, we can use the relation between $\Ac_d$ and $\Aa_d$ described in \cite{DGK2} to show the following:

\begin{headerthm}{Cor2} Let $V$ be a rational VOA and denote by $\{S^1, \dots, S^m\}$ the set of isomorphism classes of simple admissible $V$-modules. Then \[\Aa_d\cong \prod_{j=0}^d \prod_{i=1}^m {\Mat}_{\dim(S^i_j)}(\CC),\] where $S^i_j$ is the $j$-th graded component of $S^i$, that is $S^i=\oplus_{j \in \ZZ_{\ge 0}} S^i_j$.
\end{headerthm}
Several examples where these explicit formulas are worked out are given in \cref{Sec:Examples}.

Applications of \cref{thm:Acd_Bad} extend to VOAs that are not rational and were the motivation for this work. To put the main result into context, we recall that a key step in the proof of \cref{thm:equivalences} is to show an equivalence of categories between $\Ac_d$-modules and $\Ba_d$-modules. \cref{thm:SpecialCase1} concerns a natural class of non-rational VOAs for which there is an equivalence of categories between $\Ac_d$-modules and $\Ba_d$-modules.  To describe them, we introduce a small amount of notation.

 Heuristically, when the $d$-th mode transition algebra $\Ac_d$ is strongly unital, the information in the degree $d$ part of an $\NN$-graded $V$-module $W=\oplus W_d$ (which is generated in degree zero),  is not new relative to the degree $0$ data. If $\Ac_d$ is strongly unital, and if  $W_d=0$, for some $V$-module $W$, then we find that something essential has been lost in translation, and we say  $d$ is \emph{exceptional} for $V$. We say that $d$ is \emph{non-exceptional} for $V$ if $\Ac_d$ is strongly unital, and if for all admissible $V$-modules $W$, one has $W_d \neq 0$. We have found the following, which we express using this terminology:

\begin{headerthm}{thm:SpecialCase1} Suppose $V$ is a vertex operator algebra for which $\Ac_d$ is strongly unital, and such that $\Aa$ is commutative, Noetherian, and connected. If $d$ is non-exceptional for $V$, then there is an equivalence of categories between $\Ac_d$-modules and $\Aa$-modules. If $d$ is exceptional for $V$, then $\Ac_d$ is the zero ring.
\end{headerthm}

As an application, in Corollary \ref{Cor1}, we describe a class of VOAs for which mode transition algebras $\Ac_d$ have strong units, and such that the category of $\NN$-graded V-modules behave as if they are modules over rational VOAs. This determines the structure of the higher Zhu algebras $\Aa_d(V)$ (\cite{DongLiMason:HigherZhu}), generalizing results of \cite{DGK2}.

\begin{headerthm}{Cor1}Let $V$ be a VOA such that, as a module over itself, it is simple, generated in degree zero, and has finite-dimensional graded components. Let $d$ be a positive integer and suppose that $j$ is non-exceptional for $V$ and $\Ac_j$ is strongly unital for all $j \leq d$. If $\Aa$ is commutative, connected, Noetherian, and every projective $\Aa$-module is free, then \[\Aa_d \cong \prod_{j=0}^d {\Mat}_{\dim(V_j)}(\Aa).\] 
\end{headerthm}

Corollary \ref{Cor1} applies, for instance, to Heisenberg VOAs of any rank (see \cref{ThmHeisenberg}). 

Few explicit expressions for higher-level Zhu algebras have appeared in the literature. In \cite{addabbo.barron:level2Zhu}, Addabbo and Barron compute $\Aa_2$ for the rank-1 Heisenberg VOA and conjectured that in this case, $\Aa_d$ should be a product of matrices. This conjecture was proved in \cite{DGK2}, and \cref{Cor1} is a generalization of this result. While there are special cases where it is not true that $\Aa_d$ will be a product of matrices with coefficients in $\Aa$ (see, e.g., \cite{BarronVirL1}),  if the $d$-th mode transition algebras  $\Ac_d$ have strong units  for all $d$, then we expect that  one should obtain expressions for higher level Zhu algebras  that are similar to the one given in
\cref{Cor1}.

\subsection*{Plan of the paper} We start in \cref{sec:Background} 
by reviewing the definitions and  main properties of mode transition algebras, $d$-th mode transition algebras, and strong identity elements, all of which were initially defined in \cite{DGK2}. In \cref{sec:StrongIDs}, we give several consequences for the existence of strong identity elements in $d$-th mode transition algebras. \cref{thm:Acd_Bad} and \cref{cor:connected-equiv}, the main results of this work, are proved in  \cref{Morita}.  In \cref{sec:RatIndCons}, we show that properties of the mode transition algebra $\Ac$ may be leveraged to study induced modules, properties of contragredient modules, and ultimately, to detect whether $V$ is rational  as described in \cref{thm:equivalences}. 
In  \cref{Sec:Algebraic} we prove  \cref{Cor2} and \cref{Cor1}, in which explicit expressions for higher-level Zhu algebras are given. In \cref{sec:Computations}, explicit presentations are provided for the mode transition algebras, zig-zag algebras, and higher Zhu algebras for several examples,  including the rank-$n$ Heisenberg VOA, the (non-rational) Virasoro, and lattice VOAs, which are rational.

\subsection*{Acknowledgements} Damiolini was supported by NSF DMS -- 2401420, Gibney  by NSF DMS -- 2200862, and Krashen  by NSF DMS -- 2401018.  We have benefited from conversations on this and related topics with Katrina Barron, Xu Gao, Jianqi Liu,  and Yi-Zhi Huang. We are especially grateful to Jianqi and Xu, who carefully read the manuscript and pointed out that we could remove a finiteness assumption in \cref{thm:equivalences}.  We thank the participants of the \textit{Workshop on New Directions in Conformal Field Theory} at the Fields Institute. In particular, we are grateful to Thomas Creutzig, David Ridout, and Christoph Schweigert for useful comments, questions, and advice.

\section{Mode Transition Algebras }\label{sec:Background} 

We recall here in \cref{MTADef} the definition and main properties of mode transition algebras and $d$-th mode transition sub-algebras, as initially  defined in \cite[Definition 3.2.6]{DGK2}. In \cref{strong unit section}, we define strong units. We refer to \cite{DGK2} and references therein for the background relied on here, including basics about vertex operator algebras $V$, their associated Zhu algebra $\Aa$, universal enveloping algebra $\UV$, and induced modules. We note that here every VOA is assumed to be of CFT type, which means that $V$ is $\NN=\mathbb{Z}_{\ge 0}$-graded, the graded components are finite-dimensional, and the degree zero component is one dimensional.   Throughout this work, algebras are assumed to be associative but not necessarily commutative or unital. In particular, ideals are also rings. 
\subsection{Definition and basic properties}\label{MTADef}
In \cite{DGK2}, using the induction functor, that takes a left $\Aa$-module $\Maa$ to a left $V$-module
\begin{equation}\label{eq:PhiLDef} 
    \PhiL(\Maa)  =  \dfrac{\UV}{\UV \cdot \UV_{\leq -1}} \otimes_{\UV_0} \Maa,
\end{equation}
we have explained how to associate to every VOA $V$ its \emph{mode transition algebra} 
\[\Ac=\Ac(V)=\bigoplus_{i,j \in \NN}\Ac_{i,-j},\] which is a bigraded, associative algebra. More precisely, 
\begin{equation} \label{eq:PhiL} \Ac=\PhiL(\Aa) \otimes_\Aa \PhiR(\Aa) =  \left(\dfrac{\UV}{\UV \cdot \UV_{\leq -1}} \Otimes{\UV_0} \Aa \right) \otimes_\Aa \left( \Aa \Otimes{\UV_0} \dfrac{\UV}{\UV_{\geq 1} \cdot \UV} \right)
\end{equation} where $\UV$ is the universal enveloping algebra of $V$, which is naturally graded (see \cite{DGK2}), and $\PhiL$ is the induction functor (see \eqref{eq:PhiL} or  \cite[Definition 3.1.1]{DGK2}). The multiplication on $\Ac$ is denoted $\star$ and, although not necessarily unital, it satisfies
\[ \Ac_{i,-j} \star \Ac_{k, -\ell} \subseteq \delta_{j,k} \Ac_{i,-\ell}.\]
Thus, $\Ac_d \coloneqq \Ac_{d,-d}$ is an associative algebra---called the \emph{$d$-th mode transition algebras} of $V$---and $\Ac_{i,-j}\subset \Ac$ has the structure of a $(\Ac_{i},\Ac_{j})$-bimodule. A natural way to visualize $\Ac$,  together with its bigrading and algebra structure, is by embedding it into the space of infinite dimensional matrices via the following representation
\[ \Ac= \begin{bmatrix}
    \Ac_0 & \Ac_{0,-1} & \Ac_{0,-2} & \cdots \\
    \Ac_{1,0} & \Ac_{1} & \Ac_{1,-2} & \cdots \\
    \Ac_{2,0} & \Ac_{2,-1} & \Ac_{2} & \cdots\\ 
    \vdots & \vdots & \vdots& \ddots
\end{bmatrix}, 
\]  where only finitely many terms are nonzero. In \cref{Peirce}, we isolate key structural properties of $\Ac$, and indeed, we prove \cref{thm:Acd_Bad} in the more general context of Peirce algebras, of which $\Ac$ is our primary example. 

Moreover, $\Ac$ acts on admissible $V$-modules $W=\oplus_{d\in\NN}W_d$, so that $\Ac_{i, -j} W_j \subset W_i$. In particular, one has that $\Ac_d$ acts on the $d$-th graded components $W_d$.

Note that the associativity of the multiplication may also be interpreted as defining a bilinear pairing
\begin{equation} \label{eq:pairing-ijk} \underset{i \, j \, k}{\star}  \colon \Ac_{i, -j} \otimes_{\Ac_j} \Ac_{j, -k} \longrightarrow \Ac_{i, -k},\end{equation}
which is also an $(\Ac_i,\Ac_k)$ bimodule morphism. When there is no confusion with the indices, we will denote this pairing by $\star$ and avoid further decorations.

\begin{remark} \label{rmk:rmks} We next summarize a few key features of the star product. \begin{enumerate}[label=(\ref*{rmk:rmks}.\alph*)]
    \item A a word of caution, as $\Ac_j$ need not be unital, it is not always the case that $\Ac_j \otimes_{\Ac_j} \Ac_{j, -k}$ is necessarily isomorphic to $\Ac_{j, -k}$ via $\underset{j \, j \, k}{\star}$.  
    On the other hand, if $\Ac_j$ is unital and $\Ac_{j, -k}$ a unital module via the $\star$-product, then $\underset{j \, j \, k}{\star}$ is an isomorphism. This is always true, for example, when $j = 0$.  
    \item It follows from the definition of $\star$ that one has a canonical isomorphism
 \begin{equation} \label{eq:star-iso} \underset{i\, 0 \, j}{\star} \colon  \Ac_{i, 0} \otimes_{\Ac_0} \Ac_{0, -j} \, {\overset\cong\longrightarrow} \, \Ac_{i, -j}, \qquad \xx \otimes \yy \mapsto \star(\xx \otimes \yy) = \xx \star \yy.\end{equation}
 and a pairing
\begin{equation} \label{eq:star-zero} 
\underset{0 \,j \,0}{\star} \colon  \Ac_{0, -j} \otimes_{\Ac_j} \Ac_{j, 0} \longrightarrow \Ac_0=\Aa, \qquad \xx \otimes \yy \mapsto \star(\xx \otimes \yy)=\xx \star \yy, \end{equation}
 and further, by associativity, the pairing \eqref{eq:pairing-ijk} can be written in terms of \eqref{eq:star-iso} and \eqref{eq:star-zero} as:
 \[
 \xymatrix@C=1.3cm{
 \Ac_{i, -j} \otimes_{\Ac_j} \Ac_{j, -k}  \ar[r]^{\underset{i \, j \, k}{\star}} &   \Ac_{i, -k} \\
(\Ac_{i, 0} \otimes_{\Ac_0} \Ac_{0, -j}) \otimes_{\Ac_j} (\Ac_{j, 0} \otimes_{\Ac_0} \Ac_{0, -k})  \ar[u]^{\eqref{eq:star-iso} \otimes \eqref{eq:star-iso}}_\cong \ar[d]^=&
\Ac_{i, 0} \otimes_{\Ac_0} \Ac_0 \otimes_{\Ac_0} \Ac_{0, -k}
\ar[u]^{\cong}_{\eqref{eq:star-iso}}
\\
\Ac_{i, 0} \otimes_{\Ac_0} (\Ac_{0, -j} \otimes_{\Ac_j} \Ac_{j, 0}) \otimes_{\Ac_0} \Ac_{0, -k}. \ar@/_1pc/[ru]_-{\id \otimes \eqref{eq:star-zero} \otimes \id} 
 }.
 \]
Said otherwise:
 \[  \underset{i \, j \, k}{\star} = \underset{i \, 0 \, k}{\star} \circ \left( \id \otimes \underset{0 \, j \, 0}{\star} \otimes \id \right) \circ \left( \underset{i \, 0 \, j}{\star}^{-1} \otimes \underset{j \, 0 \, k}{\star}^{-1}\right).
 \]\end{enumerate} \end{remark}

\subsection{Strong identity elements} \label{strong unit section} In \cite{DGK2} we introduced the concept of {strong identity element}. Here, we recall one of the many equivalent definitions.

\begin{defn} \label{def:strong-id} We say that an element $1_d \in \Ac_d$ is a \emph{strong identity} if it acts as the identity on the right on $\Ac_{0, -d}$ and as the identity on the left on $\Ac_{d, 0}$. That is, if for each $\xx \in \Ac_{0, -d}$ and $\yy \in \Ac_{d, 0}$, we have
\[ 1_d \star \xx = \xx \qquad \text{and} \qquad
\yy \star 1_d = \yy.\] If $\Ac_d$ admists a strong identity element, then $\Ac_d$ is said to be \emph{strongly unital}. 
\end{defn}

Said another way, $\Ac_d$ is strongly unital if and only if the subalgebra
\[\Ac_{\{0, d\}} \coloneqq \Ac_0 \oplus \Ac_{0, -d} \oplus \Ac_{d, 0} \oplus \Ac_d = 
\begin{bmatrix}
\Ac_{0, 0} &  0 & \cdots & 0 & \Ac_{0, -d} \\ 
0 && \cdots && 0\\
\vdots && \ddots &&  \vdots \\
0 && \cdots && 0\\
\Ac_{d, 0}  &  0 & \cdots & 0 & \Ac_{d, -d}  
\end{bmatrix}
\subset \Ac \]
is unital. We refer to \cite[Definition/Lemma 3.3.1]{DGK2} for other characterizations of strong identity elements.

\section{Consequences of the existence of strong identity elements}\label{sec:StrongIDs}

This section shows some of the first consequences of strong unitality. In what follows we fix a VOA $V$, we let $\Aa$ be its Zhu algebra and  $\Ac$ be its mode transition algebra. The running assumption of this section, which we will repeat for clarity, is that $\Ac_d$ is strongly unital for every $d \in \NN$.

\begin{lem} \label{lem:surj} Let $\Maa$ be an $\Aa$-module and assume that $\Ac_d$ is strongly unital for every $d \in \NN$. Then for any $V$-module $W \subset \PhiL(\Maa)$, the natural map $\PhiL(W_0) \to W$ is surjective. 
\end{lem}
\begin{proof}
Let $w \in W_d$. It is enough to show that $w$ can be written as a finite sum $\sum \alpha_i x_i$ for some $x_i \in W_0$ and $\alpha_i \in \UV_d$, where $\UV_d$ is the degree $d$ part of the universal enveloping algebra of $V$. Since $\Ac_d$ is strongly unital for every $d \in \NN$, there exists a strong identity element $\bm{1}_d$, which we can write as $\bm{1}_d = \sum \alpha_i \otimes 1 \otimes \beta_i$ as in \eqref{eq:PhiL}. Since $w \in \PhiL(\Maa)_d$, on which $\bm{1}_d$ as as the identity, then $w = \bm{1}_d w = \sum \alpha_i \beta_i w$, and setting $x_i = \beta_i w \in W_0$ gives our desired expression.
\end{proof}

Said more concretely, the previous statement tells us that, under the assumption of strong unitality, every submodule of a $V$-module induced from an $\Aa$-module is itself induced from an $\Aa$-module. In the following lemma, we show that an even stronger result holds for quotients.

\begin{lem} \label{units make quotients in degree 0}
Let $\Maa$ be an $\Aa$-module and assume that $\Ac_d$ is strongly unital for every $d \in \NN$. Suppose we have a surjective map of $V$-modules $\pi \colon  \PhiL(\Maa) \twoheadrightarrow Q$. Then the natural map $\PhiL(Q_0) \to Q$ is an isomorphism, and $\pi$ is given by applying the induction functor $\PhiL$ to $\pi_0 \colon \Maa \to Q_0$.
\end{lem}
\begin{proof} Let $W =\ker(\pi)$. As $\PhiL$ is a right exact functor and $0 \to W_0 \to \Maa \to Q_0 \to 0$ is exact we get a commutative diagram
\[\xymatrix@R=.5cm{
& & 0 \ar[d] \\
& \PhiL(W_0) \ar[r] \ar[d] & \PhiL(\Maa) \ar[d] \ar[r] & \PhiL(Q_0) \ar[d] \ar[r] & 0 \\
0 \ar[r] & W \ar[r] \ar[d] & \PhiL(\Maa) \ar[d] \ar[r] & Q \ar[r] & 0 \\
& 0  & 0
}\]
with exact rows and columns (where we use \cref{lem:surj} to ensure that $\PhiL(W_0) \to W$ is surjective). The Snake Lemma, therefore, implies the right-hand vertical map is an isomorphism, as claimed.
\end{proof}

\begin{lem} \label{generated induced}
Assume that $\Ac_d$ is strongly unital for every $d \in \NN$. If $W$ is an $\NN$-graded admissible $V$-module which is generated by its degree $0$ part $W_0$ then $W \cong \PhiL(W_0)$.
\end{lem}
\begin{proof}
As $W$ is generated by its degree $0$ part, we have a surjective map $\PhiL(W_0) \to W$ inducing an isomorphism on degree $0$. But by \cref{units make quotients in degree 0}, any such quotient must be induced by a quotient of the degree $0$ part. It follows that the above map must be an isomorphism.
\end{proof}

\begin{thm} \label{unital zero equivalence}
Assume that $\Ac_d$ is strongly unital for every $d \in \NN$. Then the functor $\PhiL$ induces an equivalence of categories between the full subcategory of $\NN$-graded admissible $V$-modules consisting of those generated by their degree $0$ parts and the category of $\Aa$-modules. 

In particular, if $W$ and $U$ are $\NN$-graded $V$-modules which are generated by their degree $0$ parts, then $\Hom_V(W, U) = \Hom_\Aa(W_0, U_0)$.
\end{thm}
\begin{proof}
We note that we have two functors \[\Omega \colon V{\mathrm{-mod}} \to \Aa{\mathrm{-mod}} \qquad \text{ and } \qquad \PhiL \colon  \Aa{\mathrm{-mod}}\to V{\mathrm{-mod}},\] where $\Omega(W)=W_0$ and such that the composition $\Omega \PhiL$ is equivalent to the identity of $\Aa$-modules (here we write $V{\mathrm{-mod}}$ to denote the category of $\NN$-graded admissible $V$-modules). Consequently, the functor $\Omega$ is automatically full and essentially surjective. We also observe that the essential image of $\PhiL$ consists exactly of those $\NN$-graded admissible modules, which are generated by their degree $0$ part, by \cref{generated induced}. 

To show that this gives an equivalence as claimed, it suffices to check that the functor $\Omega$ is faithful. For this, suppose $W$ and $U$ are generated by their degree $0$ parts (and hence induced) and suppose $f: W \to U$ with $f_0 = 0$. To show that $f = 0$, it suffices to show that $\im f = 0$. But as $\im f$ is a quotient of $W$, by \cref{units make quotients in degree 0}, the quotient map $W \to \im f$ is induced by its degree $0$ part. But $f_0 = 0$ implies that this is $0$. Hence $\im f = 0$ and $f = 0$.
\end{proof}

\begin{cor} \label{units induce simply}
Assume that $\Ac_d$ is strongly unital for every $d \in \NN$. Then for $\Maa$ an $\Aa$-module, $\PhiL(\Maa)$ is simple if and only if $\Maa$ is.
\end{cor}
\begin{proof}
As submodules and quotients of $\PhiL(\Maa)$ are in bijection with submodules and quotients of $\Maa$ by \cref{unital zero equivalence}, it follows that $\Maa$ has no proper submodules exactly when $\PhiL(\Maa)$ has no proper submodules. Hence, one is simple if and only if the other is. 
\end{proof}

\section{Morita equivalences}\label{Morita}

Here, we prove \cref{thm:Acd_Bad} and \cref{cor:connected-equiv}, the main results of this work. To begin, we consider properties of the map \eqref{eq:star-zero}, which we denote $\star$ here. 
\subsection{Peirce algebras and the zig-zag algebras}
While we are primarily interested in structures arising from the study of VOAs, the results in this section apply in a more general context. In particular, we may encapsulate the relevant structures from the previous section by introducing the following definition (motivated by the resemblance to the Peirce decomposition of an associative algebra):

\begin{defn}\label{Peirce}
A \emph{Peirce algebra} is a (not necessarily unital) algebra $\Ac$ (over a commutative ring $R$) with an $R$-module bigrading $\Ac=\bigoplus_{i,j \in \NN}\Ac_{i,-j}$ such that 
\begin{enumerate}
    \item $\Ac_{i,-j} \star \Ac_{k, -\ell} \subseteq \delta_{j,k} \Ac_{i,-\ell}$,
    \item $\Aa \coloneqq \Ac_{0, 0}$ is a unital subalgebra,
    \item the product endows $\Ac_{i, 0}$ (resp. $\Ac_{0, -j}$) with a unital right (resp. left) $\Aa$-module structure,
    \item \label{contract} the product induces an isomorphism $\Ac_{d, 0} \otimes_\Aa \Ac_{0, -d} \cong \Ac_{d, -d}$.
\end{enumerate}  
As in \cref{sec:Background}, we can define the $d$-th Peirce subalgebra as $\Ac_d \coloneqq \Ac_{d, -d}$ and, as in \cref{def:strong-id} we can define the notion of these being strongly unital.
\end{defn}

As was shown in \cref{sec:Background}, the mode transition algebra has the structure of a Peirce algebra. In the following, we fix a Peirce algebra $\Ac$.

\begin{defn}\label{def:ZZ}
    For every $d \in \NN$, we define $\Bc_d = \Bc_d(\Ac)$ to be the space
    \[\Bc_d \coloneqq \Ac_{0, -d} \otimes_{\Ac_d} \Ac_{d, 0}.\]
The image of $\Bc_d$ under $\star \colon \Bc_d \to \Aa$ will be denoted $\Ba_d = \Ba_d(\Ac)$.
\end{defn} 

Observe that the morphism $\star$ is an $\Aa$ bimodule map, thus $\Ba_d$ is a two-sided ideal of $\Aa$. The following statement shows that $\Bc_d$ is naturally an $\Aa$-algebra and $\star$ is an $\Aa$-algebra homomorphism. We refer to $\Bc_d$ as the \emph{$d$-th zig-zag algebra} of $V$. 

\begin{lem} \label{lem:zigzag-product} There is a natural map \[ {\MAP} \colon \Bc_d \otimes_{\Aa} \Bc_d \longrightarrow \Bc_d, \qquad \xx \otimes \yy \mapsto  \MAP(\xx \otimes \yy) \coloneqq \xx \MAP \yy \] which gives $\Bc_d$ the structure of an associative (not necessarily unital) $\Aa$-algebra, and such that the map $\star \colon \Bc_d \to \Aa$ is a (not necessarily unital) homomorphism of $\Aa$-algebras. Furthermore, if $\Ac_d$ is strongly unital, the map $\MAP$ is an isomorphism. 
\end{lem}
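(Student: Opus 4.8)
The plan is to define the multiplication on $\Bc_d = \Ac_{0,-d} \otimes_{\Ac_d} \Ac_{d,0}$ by ``contracting the middle,'' i.e. on simple tensors set
\[
(\xx_1 \otimes \yy_1) \MAP (\xx_2 \otimes \yy_2) \coloneqq \xx_1 \otimes \bigl( (\yy_1 \star \xx_2) \cdot \yy_2 \bigr),
\]
where $\yy_1 \star \xx_2 \in \Ac_{0,0} = \Aa$ via the pairing \eqref{eq:star-zero}, and then its action on $\yy_2 \in \Ac_{d,0}$ is the unital right $\Aa$-module structure from \cref{Peirce}(3). (Equivalently one can put the scalar on $\xx_1$ on the left; the two conventions agree up to the $\Aa$-bimodule structure, and the definition is set up so that $\MAP$ is $\Aa$-bilinear in the outer slots, hence descends to $\Bc_d \otimes_\Aa \Bc_d$.) First I would check this is well-defined over $\Ac_d$: replacing $\yy_1$ by $\yy_1 \star a$ and $\xx_2$ by $a' \star \xx_2$ for $a, a' \in \Ac_d$ (and, on the outer slots, replacing over $\Aa$) all leave the formula unchanged after using associativity of $\star$, specifically the factorization of $\underset{i\,j\,k}{\star}$ through $\underset{i\,0\,k}{\star}$ and $\underset{0\,j\,0}{\star}$ recorded in \cref{rmk:rmks}(b). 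The cleanest bookkeeping is to write everything in terms of the iterated pairing $\Ac_{0,-d} \otimes_{\Ac_d}\Ac_{d,0}\otimes_\Aa \Ac_{0,-d}\otimes_{\Ac_d}\Ac_{d,0} \to \Ac_{0,-d}\otimes_{\Ac_d}\Ac_{d,0}$ obtained by applying $\underset{0\,d\,0}{\star}$ in the middle, which is manifestly balanced.

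Associativity of $\MAP$ then reduces to associativity of the sequence of $\star$-pairings, i.e. to the associativity of the original algebra $\Ac$ restricted to the relevant graded pieces; both $(uv)w$ and $u(vw)$ unwind to $\xx_1 \otimes \bigl((\yy_1 \star \xx_2)(\yy_2 \star \xx_3)\yy_3\bigr)$, so this is immediate once well-definedness is in hand. That $\star\colon \Bc_d \to \Aa$ is an $\Aa$-algebra homomorphism is another direct computation: $\star\bigl((\xx_1\otimes\yy_1)\MAP(\xx_2\otimes\yy_2)\bigr) = \xx_1 \star (\yy_1\star\xx_2) \cdot \yy_2$ — here the first $\star$ is $\underset{0\,d\,0}{\star}$ and I am using that it is an $\Aa$-bimodule map — which equals $(\xx_1\star\yy_1)(\xx_2\star\yy_2)$ by associativity of $\Ac$, i.e. $\star(u)\star(v)$. ($\Aa$-linearity of $\star$ is part of the hypothesis that it is a bimodule map.)

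For the final claim, suppose $\Ac_d$ is strongly unital with strong identity $1_d$. I would write down the candidate inverse to $\MAP$ — or rather, observe that strong unitality makes $\Bc_d$ the middle term of an honest ``zig-zag'' and that $\MAP$ is then identified with a composite of the iterated $\star$-pairings each of which is an isomorphism. Concretely, strong unitality gives, for $\yy\in\Ac_{d,0}$ and $\xx\in\Ac_{0,-d}$, the identities $1_d\star\xx = \xx$ and $\yy\star 1_d = \yy$, which means that in $\Ac_{0,-d}\otimes_{\Ac_d}\Ac_{d,0}$ every element can be written using $1_d$ as a pivot, and that the map $\Bc_d\otimes_\Aa\Bc_d\to\Bc_d$ admits the section $\xx\otimes\yy \mapsto (\xx\otimes 1_d)\otimes(1_d\otimes \yy)$ wait—more carefully, the point is that $\MAP$ becomes bijective because it is, via \cref{Peirce}(4) applied to the strongly unital subalgebra $\Ac_{\{0,d\}}$, a base-changed version of the isomorphism $\Ac_{d,0}\otimes_\Aa\Ac_{0,-d}\cong\Ac_{d,-d}=\Ac_d$ combined with $\Ac_{0,-d}\otimes_{\Ac_d}\Ac_d\cong\Ac_{0,-d}$ and $\Ac_d\otimes_{\Ac_d}\Ac_{d,0}\cong\Ac_{d,0}$ (the latter two use that $\Ac_d$ is now unital and the modules are unital, cf. \cref{rmk:rmks}(a)).

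The step I expect to be the genuine obstacle is the well-definedness over $\Ac_d$ — keeping track of which $\star$ is which among $\underset{0\,d\,0}{\star}$, $\underset{0\,0\,d}{\star}$, $\underset{d\,0\,0}{\star}$ and the module actions, and verifying that the formula is balanced not just over $\Aa$ but over the possibly non-unital $\Ac_d$ on the inner tensor — everything else is a formal consequence of associativity of $\Ac$. A secondary subtlety is that, absent strong unitality, one must resist the temptation to ``insert $1_d$'': the definition of $\MAP$ must be phrased purely in terms of the pairings $\star$, and only in the strongly unital case does one get the isomorphism.
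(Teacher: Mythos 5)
Your overall architecture is the same as the paper's: define $\MAP$ by contracting the middle of $\Bc_d\otimes_\Aa\Bc_d$, verify associativity and the homomorphism property of $\star\colon\Bc_d\to\Aa$ from associativity of $\Ac$, and obtain the isomorphism from strong unitality. However, at the step you yourself single out as the crux, the degree bookkeeping is wrong and the justification as written does not parse. In $\Bc_d=\Ac_{0,-d}\otimes_{\Ac_d}\Ac_{d,0}$ one has $\yy_1\in\Ac_{d,0}$ and $\xx_2\in\Ac_{0,-d}$, so $\yy_1\star\xx_2$ lies in $\Ac_{d,-d}=\Ac_d$, \emph{not} in $\Ac_{0,0}=\Aa$: the relevant contraction is not \eqref{eq:star-zero} but the pairing $\Ac_{d,0}\otimes_{\Aa}\Ac_{0,-d}\to\Ac_d$, which is precisely the isomorphism of \cref{Peirce}\eqref{contract}. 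Consequently $(\yy_1\star\xx_2)$ acts on $\yy_2$ through the left $\Ac_d$-module structure of $\Ac_{d,0}$ (the very structure over which the inner tensor product is taken), not through a left $\Aa$-action --- indeed the left $\star$-action of $\Aa=\Ac_{0,0}$ on $\Ac_{d,0}$ is zero for $d>0$ because of the factor $\delta_{0,d}$. Your displayed formula happens to coincide with the correct one, but your well-definedness check aims at the wrong relations: substitutions such as $\yy_1\mapsto\yy_1\star a$ or $\xx_2\mapsto a'\star\xx_2$ with $a,a'\in\Ac_d$ are identically zero products (again $\delta_{0,d}=0$), whereas what must be respected are the relations $(\xx\star a)\otimes\yy=\xx\otimes(a\star\yy)$ for the two inner $\otimes_{\Ac_d}$'s together with the $\Aa$-balancing of the middle $\otimes_\Aa$. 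Likewise, ``applying $\underset{0\,d\,0}{\star}$ in the middle'' is the contraction defining $\star\colon\Bc_d\to\Ba_d$, not the multiplication.

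Once the middle contraction is correctly identified, the argument closes exactly as in the paper: \cref{Peirce}\eqref{contract} rewrites $\Bc_d\otimes_\Aa\Bc_d$ as $\Ac_{0,-d}\otimes_{\Ac_d}\Ac_d\otimes_{\Ac_d}\Ac_{d,0}$, so the middle $\Aa$-balancing is built in, and the map $u\otimes a\otimes v\mapsto u\star a\otimes v=u\otimes a\star v$ to $\Bc_d$ respects the $\otimes_{\Ac_d}$-relations by definition; associativity and the fact that $\star\colon\Bc_d\to\Aa$ is an $\Aa$-algebra homomorphism then follow from associativity of $\star$ in $\Ac$, and when $\Ac_d$ is strongly unital this last contraction is an isomorphism because $\Ac_{0,-d}$ and $\Ac_{d,0}$ are then unital $\Ac_d$-modules. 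Your final paragraph on the strongly unital case captures this correctly; it is the identification of the middle pairing and the balancing argument that need to be repaired.
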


\begin{proof} We first construct the map $\MAP$. By definition we have $\Aa=\Ac_0$ and thus (using \cref{Peirce}\eqref{contract}),
\[\Bc_d \Otimes{\Ac_0} \Bc_{d} =  \left( \Ac_{0, -d} \Otimes{\Ac_d}  \Ac_{d,0}\right) \Otimes{\Ac_0} \left( \Ac_{0,-d} \Otimes{\Ac_d} \Ac_{d, 0}\right) = \Ac_{0, -d} \Otimes{\Ac_d} \Ac_d \Otimes{\Ac_d} \Ac_{d, 0}.
\]
Notice further that the map
\[ \Ac_{0, -d} \Otimes{\Ac_d} \Ac_d \Otimes{\Ac_d} \Ac_{d, 0} \to \Ac_{0,-d} \Otimes{\Ac_d} \Ac_{d,0}=\Bc_d, \quad u \otimes a \otimes v \mapsto u \star a \otimes v = u \otimes a \star v
\] is well defined and, when $\Ac_d$ is strongly unital, then it is an isomorphism. Hence we have shown that there is a natural map 
\begin{equation*} \label{eq:mult-Bc} {\MAP} \colon  \Bc_d \otimes_{\Aa} \Bc_{d} \to \Bc_d
\end{equation*} which is an isomorphism when $\Ac_d$ is unital. We now show that this defines an associative operation on $\Bc_d$, which induces an $\Aa$-algebra structure on $\Bc_d$. Explicitly, the map $\MAP$ is given by 
    \begin{equation}\label{products}
(a_1 \otimes a_2) \MAP (b_1 \otimes b_2) = a_1 \star a_2 \star b_1 \otimes b_2 = a_1 \otimes a_2 \star b_1 \star b_2.
    \end{equation} where $a_1,b_1 \in \Ac_{0,-d}$ and $a_2, b_2 \in \Ac_{d,0}$.
    We check that the operation $\MAP$ is indeed associative using the associativity of the $\star$-product:
    \begin{multline*}
    \big((a_1 \otimes a_2) \MAP (b_1 \otimes b_2)\big) \MAP (c_1 \otimes c_2) = (a_1 \otimes (a_2 \star b_1 \star b_2) )\MAP (c_1 \otimes c_2) 
    \\= a_1 \otimes \left( a_2 \star b_1 \star b_2 \star c_1 \star c_2 \right) 
    \\= (a_1 \otimes a_2) \MAP ((b_1 \star b_2 \star c_1) \otimes c_2)= (a_1 \otimes a_2) \MAP \big((b_1 \otimes b_2) \MAP (c_1 \otimes c_2)\big).
    \end{multline*}
Finally, we are left to show that the map $\star  
 \colon \Bc_d \to \Aa$ is an algebra homomorphism. This is true because, by identifying $\Aa$ with $\Ac_0$, the product on $\Aa$ is the star product.
\end{proof}

In what follows we denote the action of $\Aa$ on $\Bc_d$ by $\starout$ and use $\star$ to denote the product of $\Ac$, as well as the map $\Bc_d \to \Ba_d \subseteq \Aa$. As in \cref{lem:zigzag-product}, the multiplication in $\Bc_d$ is denoted $\MAP$. These different types of operation are involved, as shown in the following lemma.

\begin{lem} \label{lem:action-through-A}
The left (resp. right) action of $\Bc_d$ on itself factors through the natural action of $\Aa$ via the homomorphism $\Bc_d \twoheadrightarrow \Ba_d \hookrightarrow \Aa$, that is
\[ \xx \starout (\star(\yy))= \xx \MAP \yy = (\star(\xx)) \starout \yy 
\] for every $\xx, \yy \in \Bc_d$.
\end{lem}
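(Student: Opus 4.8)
The plan is to reduce everything to the explicit description \eqref{products} of the product $\MAP$ on $\Bc_d$ together with the associativity of the $\star$-product. First I would unwind the meaning of $\starout$: by construction $\Bc_d = \Ac_{0,-d} \otimes_{\Ac_d} \Ac_{d,0}$ carries a left $\Aa$-module structure induced from the unital left $\Aa = \Ac_{0,0}$-module structure on the first factor (the third axiom of \cref{Peirce}), namely $a \starout (b_1 \otimes b_2) = (a \star b_1) \otimes b_2$, and a right $\Aa$-module structure induced from the unital right $\Aa$-module structure on the second factor, namely $(a_1 \otimes a_2) \starout a = a_1 \otimes (a_2 \star a)$. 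I would also record, from \eqref{eq:star-zero} with $j = d$, that $\star(a_1 \otimes a_2) = a_1 \star a_2 \in \Ac_{0,0} = \Aa$.

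Since $\Bc_d$ is generated as an abelian group by elementary tensors and the three operations $\MAP$, $\starout$, $\star$ are all additive (the first by \cref{lem:zigzag-product}, the rest by construction), it suffices to verify the two asserted identities for $\xx = a_1 \otimes a_2$ and $\yy = b_1 \otimes b_2$, where $a_1, b_1 \in \Ac_{0,-d}$ and $a_2, b_2 \in \Ac_{d,0}$. For the identity $\xx \MAP \yy = (\star(\xx)) \starout \yy$: by \eqref{products} the left-hand side is $(a_1 \star a_2 \star b_1) \otimes b_2$, whereas the right-hand side is $(a_1 \star a_2) \starout (b_1 \otimes b_2) = ((a_1 \star a_2) \star b_1) \otimes b_2$, and the two agree by associativity of $\star$. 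For the identity $\xx \starout (\star(\yy)) = \xx \MAP \yy$: by \eqref{products} the right-hand side equals $a_1 \otimes (a_2 \star b_1 \star b_2)$, while the left-hand side is $(a_1 \otimes a_2) \starout (b_1 \star b_2) = a_1 \otimes (a_2 \star (b_1 \star b_2))$, again equal by associativity. This proves the lemma, and shows in particular that $\Bc_d \twoheadrightarrow \Ba_d \hookrightarrow \Aa$ is the universal map through which both one-sided regular actions of $\Bc_d$ on itself factor.

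I do not expect a genuine obstacle: once \eqref{products} is in hand, the statement is a direct manipulation with associativity. The only point needing a word of care is that $\Ac_d$ need not be unital, so one should not try to phrase the argument in terms of $\Ac_d$-module isomorphisms; but this is a non-issue here, since the computation is carried out purely with representatives in $\Ac_{0,-d}$ and $\Ac_{d,0}$ and uses only the structure $\star$-products $\Ac_{0,0}\times\Ac_{0,-d}\to\Ac_{0,-d}$, $\Ac_{d,0}\times\Ac_{0,0}\to\Ac_{d,0}$, $\Ac_{0,-d}\times\Ac_{d,0}\to\Ac_{0,0}$ and their mutual associativity, all of which are part of the Peirce axioms.
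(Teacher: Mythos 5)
Your proposal is correct and follows essentially the same route as the paper: reduce to elementary tensors, apply the explicit formula \eqref{products} for $\MAP$, and identify the result with the $\Aa$-action $\starout$ on the first (resp.\ second) tensor factor using associativity of $\star$. The paper's proof is just a terser version of your computation (it writes out only one of the two identities, the other being symmetric), so there is nothing to add.
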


\begin{proof} The statement follows from the chain of equalities
\[
(a_1 \otimes a_2) \MAP (b_1 \otimes b_2) = (a_1 \star a_2 \star b_1) \otimes b_2 = (a_1 \star a_2) \star b_1 \otimes b_2 = (a_1 \star a_2) \starout (b_1 \otimes b_2),
 \]  where the last equality holds because $\Aa$ acts on $\Bc_d$ via the star product.
\end{proof}

\begin{lem} \label{bad idempotent ideal}
Suppose that $\Ac_d$ is strongly unital. Then $\Ba_d$ is an idempotent (two-sided) ideal of $\Aa$.     
\end{lem}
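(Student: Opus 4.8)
The plan is to show $\Ba_d \star \Ba_d = \Ba_d$ directly, using the strong identity element $1_d \in \Ac_d$ together with the description of $\Ba_d$ as the image of the star pairing \eqref{eq:star-zero}. Since $\Ba_d$ is already known to be a two-sided ideal of $\Aa$ (observed right after \cref{def:ZZ}), and idempotent means $\Ba_d^2 = \Ba_d$, only one inclusion requires work: the inclusion $\Ba_d \subseteq \Ba_d \star \Ba_d$, as the reverse inclusion $\Ba_d \star \Ba_d \subseteq \Ba_d$ is automatic from $\Ba_d$ being an ideal.

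First I would recall that a typical element of $\Ba_d$ has the form $\star(\xx \otimes \yy) = \xx \star \yy$ with $\xx \in \Ac_{0,-d}$ and $\yy \in \Ac_{d,0}$, i.e. it is the image of the element $\xx \otimes \yy \in \Bc_d$ under $\star \colon \Bc_d \to \Ba_d$. The key observation is that, since $1_d$ is a strong identity, we can insert it: $\xx \star \yy = \xx \star 1_d \star \yy = (\xx \star 1_d) \star (1_d \star \yy)$ using $1_d \star 1_d = 1_d$. Now $\xx \star 1_d \in \Ac_{0,-d}$ and $1_d \star \yy \in \Ac_{d,0}$, so I would factor: write $\xx \star \yy = (\xx \star 1_d) \star (1_d \star \yy)$. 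To turn this into a product of two elements of $\Ba_d$, I would further use that $1_d \in \Ac_d = \Ac_{d,-d} \cong \Ac_{d,0} \otimes_\Aa \Ac_{0,-d}$ by \cref{Peirce}\eqref{contract}, so $1_d = \sum_k e_k \star f_k$ for some $e_k \in \Ac_{d,0}$, $f_k \in \Ac_{0,-d}$. Then $\xx \star \yy = \xx \star 1_d \star \yy = \sum_k (\xx \star e_k) \star (f_k \star \yy)$, and each $\xx \star e_k \in \Ac_{0,0} = \Aa$ actually lies in $\Ba_d$ (it is $\star(\xx \otimes e_k)$), and similarly $f_k \star \yy \in \Ba_d$. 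Hence $\xx \star \yy = \sum_k (\xx \star e_k) \star (f_k \star \yy) \in \Ba_d \star \Ba_d$, giving the desired inclusion.

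The main subtlety to be careful about is the precise meaning of the various star products and which graded pieces the intermediate elements land in — in particular checking that $\xx \star e_k \in \Ac_{0,0}$ (here $\xx \in \Ac_{0,-d}$, $e_k \in \Ac_{d,0}$, so $\Ac_{0,-d} \star \Ac_{d,0} \subseteq \Ac_{0,0}$ by the grading rule $\Ac_{i,-j}\star\Ac_{k,-\ell}\subseteq\delta_{j,k}\Ac_{i,-\ell}$ with $i=0, j=k=d, \ell=0$) and that this element equals the image of $\xx\otimes e_k$ under the pairing \eqref{eq:star-zero}, hence lies in $\Ba_d$ by definition. The strong identity hypothesis is used exactly to produce the decomposition $1_d = \sum_k e_k \star f_k$ with the property that inserting it between $\xx$ and $\yy$ does nothing — though in fact we only need that $1_d$ lies in the image of $\Ac_{d,0}\otimes_\Aa\Ac_{0,-d}$, which holds since that image is all of $\Ac_d$ by \cref{Peirce}\eqref{contract}, together with $1_d$ acting as identity. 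I do not expect a serious obstacle here; the argument is a short bookkeeping computation once the decomposition of $1_d$ is in hand.
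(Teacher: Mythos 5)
Your proof is correct, and it rests on the same underlying mechanism as the paper's — using the strong identity to exhibit each generator of $\Ba_d$ as a product of two elements of $\Ba_d$ — but it is packaged differently. The paper deduces the lemma in two lines from \cref{lem:zigzag-product}: strong unitality makes the multiplication $\MAP$ on the zig-zag algebra surjective, so $\Bc_d \MAP \Bc_d = \Bc_d$, and since $\star \colon \Bc_d \to \Aa$ is a ring homomorphism, its image $\Ba_d$ inherits the idempotency. You instead argue at the level of elements of $\Ac$: you insert $1_d$ into $\xx \star \yy$, expand $1_d = \sum_k e_k \star f_k$ using the surjectivity in \cref{Peirce}\eqref{contract}, and regroup by associativity so that each factor $\xx \star e_k$ and $f_k \star \yy$ visibly lies in $\Ba_d$, which together with the ideal property (noted after \cref{def:ZZ}) gives $\Ba_d^2 = \Ba_d$. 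This is essentially the computation hidden inside the proof of \cref{lem:zigzag-product}, where the strong identity is inserted at the level of tensors; so your version is a self-contained, element-wise argument that never needs the ring structure on $\Bc_d$ nor the fact that $\star$ is an algebra map, while the paper's version is shorter given the preceding lemma and records the stronger statement that $\MAP$ is an isomorphism, with the identity $\Bc_d \MAP \Bc_d = \Bc_d$ reused later (e.g.\ in the proof of \cref{B units}). One cosmetic remark: your intermediate step $(\xx\star 1_d)\star(1_d\star\yy)$ ``using $1_d\star 1_d=1_d$'' is unnecessary (and $1_d\star 1_d=1_d$ itself would need the same decomposition of $1_d$ to justify); your main chain $\xx\star\yy=\xx\star 1_d\star\yy=\sum_k(\xx\star e_k)\star(f_k\star\yy)$ already does the job.
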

\begin{proof}
It follows from \cref{lem:zigzag-product} that whenever $\Ac_d$ is strongly unital, then $\Bc_d \MAP \Bc_d$, simply denoted $\Bc_d^2$, coincides with $\Bc_d$ and that this is independent of whether the algebra $\Bc_d$ is unital or not. Consequently, as the map $\star \colon \Bc_d \to \Aa$ is a ring map, its image, the ideal $\Ba_d \triangleleft \Aa$, also satisfies $\Ba_d^2 = \Ba_d$.     
\end{proof}

\begin{prop} Suppose that $\Ba_d$ is unital, with identity $\epsilon_d$. Then  $\Aa = \Ba_d \times \Ba_d'$ where $\Ba_d'$ is the two-sided ideal generated by the complementary idempotent $1_\Aa - \epsilon_d$.
\end{prop}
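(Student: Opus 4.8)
The plan is to exhibit a central idempotent. Since $\Ba_d$ is a two-sided ideal of $\Aa$ which happens to be a unital ring in its own right with identity $\epsilon_d$, the first step is to show $\epsilon_d$ is a central idempotent of $\Aa$. To see centrality, take any $a \in \Aa$. Then $a\epsilon_d \in \Ba_d$ because $\Ba_d$ is a left ideal, so $a\epsilon_d = \epsilon_d(a\epsilon_d) = \epsilon_d a \epsilon_d$ using that $\epsilon_d$ is a left identity for $\Ba_d$. Symmetrically, $\epsilon_d a \in \Ba_d$ since $\Ba_d$ is a right ideal, so $\epsilon_d a = (\epsilon_d a)\epsilon_d = \epsilon_d a \epsilon_d$ using that $\epsilon_d$ is a right identity for $\Ba_d$. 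Combining the two gives $a\epsilon_d = \epsilon_d a \epsilon_d = \epsilon_d a$, so $\epsilon_d$ is central. It is clearly idempotent since it is the identity of the ring $\Ba_d$.

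Next I would set $e \coloneqq \epsilon_d$, a central idempotent, and $e' \coloneqq 1_\Aa - e$, the complementary central idempotent (central because $e$ is, idempotent because $e$ is: $(1-e)^2 = 1 - 2e + e^2 = 1 - e$). Then $\Aa e$ and $\Aa e'$ are two-sided ideals, and the standard Peirce decomposition along a central idempotent gives $\Aa = \Aa e \oplus \Aa e'$ as $R$-modules, with the product respecting this decomposition (cross terms vanish: $(\Aa e)(\Aa e') \subseteq \Aa e e' \Aa = 0$ by centrality), so this is a ring direct product $\Aa \cong \Aa e \times \Aa e'$. It remains to identify $\Aa e$ with $\Ba_d$ and $\Aa e'$ with $\Ba_d'$. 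For the first: $\Aa e \subseteq \Ba_d$ since $\Ba_d$ is an ideal containing $e$, and conversely any $b \in \Ba_d$ satisfies $b = be \in \Aa e$ since $e$ is the identity of $\Ba_d$, so $\Ba_d = \Aa e$. For the second, by definition $\Ba_d'$ is the two-sided ideal generated by $e' = 1_\Aa - e$; since $e'$ is a central idempotent, the two-sided ideal it generates is exactly $\Aa e' = e'\Aa = e'\Aa e'$, so $\Ba_d' = \Aa e'$. This yields $\Aa = \Ba_d \times \Ba_d'$.

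I do not expect any genuine obstacle here; the statement is essentially the standard fact that a unital two-sided ideal of a unital ring is a direct factor cut out by a central idempotent. The one point requiring a little care is the centrality argument for $\epsilon_d$, which uses both the left and right identity properties of $\epsilon_d$ on $\Ba_d$ together with $\Ba_d$ being a two-sided ideal — worth writing out explicitly as above. (One could also invoke \cref{bad idempotent ideal} to note $\Ba_d$ is idempotent, but that is not needed for this proposition; unitality of $\Ba_d$ already forces $\Ba_d = \Ba_d^2$.) A secondary minor point is to confirm $1_\Aa - \epsilon_d$ genuinely generates a \emph{ring} (ideal) complementary to $\Ba_d$ rather than merely a submodule, which follows once centrality of $\epsilon_d$ is in hand.
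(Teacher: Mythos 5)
Your proposal is correct and follows essentially the same route as the paper, which proves this proposition by invoking its Lemma~\ref{lem:idemgood+splitting} (a unital two-sided ideal of a unital ring is generated by a central idempotent, yielding the product decomposition). Your direct computation $a\epsilon_d=\epsilon_d a\epsilon_d=\epsilon_d a$ is just a slightly streamlined version of the centrality argument in that lemma's proof, and the rest matches the standard decomposition used there.
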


Since $\Aa$ is unital, this follows immediately from the following standard lemma.

\begin{lem} \label{lem:idemgood+splitting} If $B$ is a unital ring, then an ideal $I \triangleleft B$ is generated by a central idempotent if and only if it is unital as a ring. In this case, one has the decomposition $B = I \times I'$ where $I'$ is the two-sided ideal of $B$ generated by the complementary idempotent $1_B -\epsilon$. 
\end{lem}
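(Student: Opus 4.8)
The statement is a standard fact about central idempotents; the plan is to prove the biconditional by a direct computation in each direction and then read off the ring decomposition. Throughout I use that for a \emph{central} element $\epsilon \in B$ the two-sided ideal it generates is simply $B\epsilon = \epsilon B$, so the phrase ``$I$ is generated by a central idempotent'' means precisely that $I = B\epsilon$ for some $\epsilon \in B$ with $\epsilon^2 = \epsilon$ central.

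For the ``only if'' direction I would assume $I$ is unital as a ring, with identity $\epsilon \in I$, and first show $\epsilon$ is central in $B$: for $b \in B$ both $b\epsilon$ and $\epsilon b$ lie in the ideal $I$, so the identity property of $\epsilon$ gives $\epsilon(b\epsilon) = b\epsilon$ and $(\epsilon b)\epsilon = \epsilon b$; since both left-hand sides equal $\epsilon b \epsilon$, this yields $b\epsilon = \epsilon b$. Then $\epsilon$ is idempotent because $\epsilon \in I$, and $I = B\epsilon$ since $B\epsilon \subseteq I$ while any $x \in I$ satisfies $x = x\epsilon \in B\epsilon$. For the converse, assume $I = B\epsilon$ with $\epsilon$ a central idempotent; then $\epsilon = 1_B\epsilon \in I$, and for $x = b\epsilon \in I$ one computes $\epsilon x = \epsilon b\epsilon = b\epsilon^2 = b\epsilon = x$ and likewise $x\epsilon = x$, so $\epsilon$ is a two-sided identity for $I$ and $I$ is unital.

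For the decomposition, set $\epsilon' \coloneqq 1_B - \epsilon$; it is a central idempotent orthogonal to $\epsilon$, since $\epsilon\epsilon' = \epsilon - \epsilon^2 = 0$ and $(\epsilon')^2 = 1_B - 2\epsilon + \epsilon^2 = \epsilon'$, and $I' \coloneqq B\epsilon'$ is the two-sided ideal it generates. From $b = b\epsilon + b\epsilon'$ we get $B = I + I'$, and the sum is direct because any $x \in I \cap I'$ satisfies $x = x\epsilon = (x\epsilon')\epsilon = x(\epsilon'\epsilon) = 0$. Finally, orthogonality gives $I \cdot I' = I' \cdot I = 0$ (using centrality to rewrite $B\epsilon \cdot B\epsilon' = BB\epsilon\epsilon' = 0$), so multiplication on $B = I \oplus I'$ is componentwise, i.e.\ $B \cong I \times I'$ as rings. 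The only points that need a moment's care are the interpretation of ``generated by'' noted at the outset and the check that the direct sum is one of rings and not merely of abelian groups; there is no genuine obstacle. Applying this with $B = \Aa$ and $I = \Ba_d$ — legitimate since $\Aa$ is unital and $\Ba_d \triangleleft \Aa$ — yields the preceding proposition.
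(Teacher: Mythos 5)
Your proof is correct and follows essentially the same approach as the paper: the substance in both is that the identity element $\epsilon$ of the unital ideal $I$ is a central idempotent with $I=B\epsilon$, from which the decomposition $B \cong I \times I'$ follows via the complementary idempotent $1_B-\epsilon$. The only differences are cosmetic — your centrality step is a bit more streamlined (you get $b\epsilon = \epsilon b\epsilon = \epsilon b$ directly from the identity property, where the paper routes through $B = B\epsilon \oplus B\eta$ and kills the cross term), and you spell out the componentwise-multiplication check that the paper leaves implicit.
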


\begin{proof} If $I$ is generated by a central idempotent $\epsilon$, then necessarily $\epsilon$ is the identity element of $I$, so we only need to show the other implication. Let $\epsilon \in I$ be the identity of $I$. We claim that $\epsilon$ is central and $I = B\epsilon$. For the first, we have $B\epsilon \subset I = I \epsilon \subset B\epsilon$, and similarly $I = \epsilon B$. Let $\eta = 1_B - \epsilon \in B$ be the complementary idempotent. We then have
\[ B = B\eta \oplus B\epsilon = \eta B \oplus \epsilon B,\]
and $\epsilon B = I = B \epsilon$. If $a \in B$, we therefore can write
$a = a \epsilon + a \eta$, and as $a \epsilon \in I = \epsilon B$, it follows that $\epsilon(a \epsilon) = a \epsilon$, and therefore
\[\epsilon a = \epsilon(a \epsilon) + \epsilon a \eta = a \epsilon + \epsilon a \eta.\]
But $\epsilon a \in I = B \epsilon$ and so $\epsilon a \eta = 0$, which shows $\epsilon a = a \epsilon$ as claimed.
\end{proof}

\begin{ex} \label{ex:idemgood} It is common for rings to have the property that a central idempotent generates every idempotent ideal:
\begin{enumerate} [label=(\ref*{ex:idemgood}.\alph*)]    
\item \label{commutative connected example} If $B$ is a unital commutative and $I$ is a finitely generated idempotent ideal of $B$, then $I$ is generated by a central idempotent \cite[Cor~6.3]{MulIT}. 
\item \label{ss example} If $B$ is unital and semisimple, then it is is a product $B = B_1 \times \cdots \times B_m$, for simple rings $B$ (see \cite[Thm~3.5]{Lam:FCNR}). Therefore, every idempotent ideal is generated by a central idempotent, as we have $B_i = e_i B$ for central primitive idempotents $e_i$, and hence, all ideals are generated by sums of such central idempotents.
\item \label{rtl example} If $V$ is a VOA, then the zeroth Zhu algebra $\Aa=\Aa(V)$ is a unital ring, which is not necessarily commutative nor Noetherian. However, when $V$ is rational, then $\Aa$ is finite and semisimple. Thus, we can use \ref{ss example} to guarantee that all idempotent ideals are generated by a central idempotent.
\end{enumerate}
\end{ex}

In many contexts---see for instance \cref{ex:idemgood}---one could use \cref{lem:idemgood+splitting} to deduce that $\Ba_d$ contains a central idempotent which would act as its identity element, and hence $\Aa \cong \Ba_d \times \Ba_d'$. It, therefore, makes sense to consider the following, not uncommon occurrence. 

\begin{prop} \label{B units}
    Suppose $\Ac_d$ is strongly unital and $\Ba_d$ is generated by a central idempotent of $\Aa$ (equivalently, is unital as a ring). Then $\Bc_d \cong \Ba_d$ and thus $\Aa$ decomposes as $\Bc_d \times \Ba_d'$.
\end{prop}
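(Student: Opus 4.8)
The plan is to show that the ring map $\star \colon \Bc_d \to \Ba_d$ is an isomorphism, and then invoke the already-established decomposition $\Aa \cong \Ba_d \times \Ba_d'$. The map is surjective by the very definition of $\Ba_d$ as the image of $\Bc_d$ under $\star$, so the whole content is injectivity. First I would record, from \cref{lem:action-through-A}, that $\xx \MAP \yy = (\star \xx) \starout \yy = \xx \starout (\star \yy)$ for all $\xx, \yy \in \Bc_d$; so the $\MAP$-product on $\Bc_d$ is entirely controlled by the $\Aa$-module structure together with the map $\star$. Since $\Ac_d$ is strongly unital, \cref{lem:zigzag-product} tells us $\MAP \colon \Bc_d \otimes_\Aa \Bc_d \to \Bc_d$ is an isomorphism; in particular $\Bc_d \MAP \Bc_d = \Bc_d$, i.e. every element of $\Bc_d$ is a finite sum of products $\xx_i \MAP \yy_i$.

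Next I would use the hypothesis that $\Ba_d$ is unital as an ideal of $\Aa$, with identity $\epsilon_d$ a central idempotent of $\Aa$. Pick any $\zz \in \Bc_d$ and write $\zz = \sum_i \xx_i \MAP \yy_i = \sum_i (\star \xx_i) \starout \yy_i$. Because each $\star\xx_i \in \Ba_d$ and $\epsilon_d$ acts as the identity on $\Ba_d$, we get $(\star\xx_i)\starout \yy_i = (\epsilon_d \cdot \star\xx_i) \starout \yy_i = \epsilon_d \starout \bigl((\star\xx_i)\starout\yy_i\bigr)$, hence $\zz = \epsilon_d \starout \zz$; so $\epsilon_d$ acts as a left (and symmetrically, right) identity on $\Bc_d$. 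Now suppose $\star(\zz) = 0$. Using \cref{lem:action-through-A} once more, together with $\zz = \sum_i \xx_i \starout (\star \yy_i)$, I can rewrite $\zz = \sum_i \xx_i \starout (\star\yy_i) = \bigl(\sum_i \xx_i \starout \cdot\bigr)$ applied to elements of $\Ba_d$; the cleanest route is: $\zz = \epsilon_d \starout \zz$, and since $\star$ is an $\Aa$-module map, $\epsilon_d \starout \zz$ is the image under the $\Aa$-action of... — more directly, write $\zz = \sum_i \xx_i \MAP \yy_i$ and compute $\zz = \sum_i \xx_i \starout (\star \yy_i)$; but also, by strong unitality and \cref{lem:zigzag-product}, $\zz$ equals $\xx \MAP \yy$ for a single pair after collapsing, or just keep the sum. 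Then $\star(\zz) = \sum_i \star(\xx_i)\cdot \star(\yy_i) = 0$ in $\Ba_d$, and applying $\starout$: $\zz = \epsilon_d \starout \zz = \star(\zz)\starout(\text{anything})$? The sharp statement I want is: for a single-tensor representative $\zz = a_1 \otimes a_2$ one need not have $\star(a_1\star a_2)=0 \Rightarrow a_1\otimes a_2 = 0$; but $\zz$ is a \emph{sum} that already lies in $\Bc_d \MAP \Bc_d$, and there \cref{lem:action-through-A} gives $\zz = (\star\xx)\starout \yy$ for the summands, so $\zz$ is determined by the elements $\star \xx_i \in \Ba_d$ acting on $\yy_i$; more efficiently, the composite $\Bc_d \xrightarrow{\MAP^{-1}} \Bc_d\otimes_\Aa \Bc_d \xrightarrow{\star \otimes \id} \Ba_d \otimes_\Aa \Bc_d \xrightarrow{\starout} \Bc_d$ is the identity (this is exactly \cref{lem:action-through-A} read through the isomorphism $\MAP$), so $\star(\zz)=0$ forces $\zz = 0$. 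That gives injectivity, hence $\Bc_d \cong \Ba_d$ as $\Aa$-algebras.

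Finally, the decomposition: since $\Ba_d$ is generated by a central idempotent $\epsilon_d$ of the unital ring $\Aa$, \cref{lem:idemgood+splitting} yields $\Aa = \Ba_d \times \Ba_d'$ with $\Ba_d'$ the two-sided ideal generated by $1_\Aa - \epsilon_d$; combining with $\Bc_d \cong \Ba_d$ gives $\Aa \cong \Bc_d \times \Ba_d'$, as claimed. I expect the main obstacle to be the injectivity argument — specifically, being careful that although a single tensor $a_1\otimes a_2$ in $\Ac_{0,-d}\otimes_{\Ac_d}\Ac_{d,0}$ can a priori be nonzero with $a_1 \star a_2 = 0$, this cannot happen for elements of $\Bc_d = \Bc_d\MAP\Bc_d$ once $\Ac_d$ is strongly unital, because then the composite described above (the "contraction then re-expand" map, which is the identity on $\Bc_d$) shows $\zz$ is recovered from $\star(\zz)$ via the $\Aa$-module structure. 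Isolating this composite identity cleanly from \cref{lem:zigzag-product} and \cref{lem:action-through-A} is the one place requiring care; everything else is bookkeeping with $\star$, $\MAP$, and $\starout$.
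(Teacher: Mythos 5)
Your first half and your endgame track the paper: showing via \cref{lem:zigzag-product} that $\Bc_d = \Bc_d \MAP \Bc_d$ and via \cref{lem:action-through-A} that the central idempotent $\epsilon_d$ acts as the identity on all of $\Bc_d$ is exactly the paper's opening move, and the final splitting $\Aa = \Ba_d \times \Ba_d'$ from \cref{lem:idemgood+splitting} is also as in the paper. The gap is the injectivity step. The composite you invoke, $\Bc_d \to \Bc_d \otimes_{\Aa} \Bc_d \to \Ba_d \otimes_{\Aa} \Bc_d \to \Bc_d$ (inverse of $\MAP$, then $\star \otimes \id$, then the action map $\starout$), is indeed the identity, but it does not factor through the map $\star \colon \Bc_d \to \Ba_d$ whose kernel you need to kill: its middle stage applies $\star$ to the left tensor factors $\xx_i$ of $\MAP^{-1}(\mathfrak{z}) = \sum_i \xx_i \otimes \yy_i$, not to $\mathfrak{z}$ itself. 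Knowing $\star(\mathfrak{z}) = \sum_i \star(\xx_i)\,\star(\yy_i) = 0$ gives no control over $\sum_i \star(\xx_i) \starout \yy_i = \mathfrak{z}$, so the inference ``$\star(\mathfrak{z})=0$ forces $\mathfrak{z}=0$'' does not follow: the composite returns any element, whether or not it lies in $\ker\star$. You sense the issue in your ``sharp statement I want'' aside, but the proposed resolution does not close it.

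The missing idea---and what the paper does---is to use surjectivity of $\star$ to lift $\epsilon_d$ to an element $\ee_d \in \Bc_d$, check via \cref{lem:action-through-A} that $\ee_d \MAP \xx = \epsilon_d \starout \xx = \xx$ and $\xx \MAP \ee_d = \xx \starout \epsilon_d = \xx$ (here you use that $\epsilon_d$ acts as the identity on $\Bc_d$ on both sides, which you have already established), so that $\ee_d$ is a two-sided identity of $\Bc_d$, and then observe that $\beta \mapsto \beta \starout \ee_d$ is a left inverse of $\star$: for $a \otimes b \in \Bc_d$ one has $(a \star b) \starout \ee_d = (a \otimes b) \MAP \ee_d = a \otimes b$. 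This composite genuinely factors through $\star$, so $\star(\mathfrak{z}) = 0$ implies $\mathfrak{z} = \star(\mathfrak{z}) \starout \ee_d = 0$, giving injectivity and hence $\Bc_d \cong \Ba_d$. With this one correction your argument coincides with the paper's proof.
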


\begin{proof} Let $\epsilon_d \in \Ba_d$ be an idempotent generator of $\Ba_d$. 
Given that $\epsilon_d$ is idempotent, we deduce that
\[ \epsilon_d \starout (\epsilon_d \starout \xx)= (\epsilon_d \star \epsilon_d) \starout \xx = \epsilon_d \starout \xx,
\] for every $\xx \in \Bc_d$, so that $\epsilon_d$ acts as the identity on elements of $\epsilon_d \starout \Bc_d$.  Using \cref{lem:action-through-A}, and the fact that $\epsilon_d$ generates $\Ba_d$, we deduce that 
\[\epsilon_d \starout \Bc_d = \epsilon_d \starout  (\Bc_d \MAP \Bc_d) = \epsilon_d \starout (\Ba_d \starout \Bc_d) = (\epsilon_d \star \Ba_d) \starout \Bc_d = \Ba_d \starout \Bc_d = \Bc_d \MAP \Bc_d = \Bc_d,\] 
so that $\epsilon_d$ acts as the identity on the whole $\Bc_d$. Consequently, the algebra map $\Bc_d \to \Ba_d$ is also a map of unital $\Ba_d$-bimodules.

Now, choose an element $\ee_d \in \Bc_d$ which maps to $\epsilon_d$ via $m_\star$. Using  \cref{lem:action-through-A}, for every $\xx \in \Bc_d$ one has:
\[ \ee_d \MAP \xx = \epsilon_d \starout \xx = \xx,\]
and similarly on the right, showing that $\ee_d$ is the identity of $\Bc_d$.

To show that $\Bc_d$ and $\Ba_d$ are isomorphic, we define the morphism of left $\Ba_d$-modules
\[\varepsilon_d \colon \Ba_d \to \Bc_d, \quad \beta \mapsto \beta \starout \ee_d.\] As a consequence of \cref{lem:action-through-A}, we can check that the composition
\begin{align*}
\Bc_d \longrightarrow \Ba_d &\longrightarrow \Bc_d \\
a \otimes b \mapsto a \star b &\mapsto (a \star b) \starout \ee_d = (a \otimes b) \MAP \ee_d = a \otimes b,
\end{align*}
is the identity, which shows that the surjective ring homomorphism $\star \colon \Bc_d \to \Ba_d$ is necessarily injective, thus an isomorphism. The rest of the statement follows from \cref{lem:idemgood+splitting}. \end{proof}

\subsection{Morita equivalences}
Before stating and proving \cref{thm:Acd_Bad},  we recall that if $B$ is a unital ring, then $B$-modules are assumed to be unital.

\begin{thm} \label{thm:Acd_Bad}
    Let $\Ac$ be a Pierce algebra. If $\Ac_d$ is \emph{strongly unital} and $\Ba_d = \Ba_d(\Ac)$ is unital, there is an equivalence of categories between $\Ac_d$-modules and 
   $\Ba_d$-modules. 
\end{thm}

\begin{proof}
We define these equivalences via the functors
\begin{align*}
\Ac_d\text{-mod} &\longrightarrow \Ba_d\text{-mod} \\  
W_d &\longmapsto \Ac_{0, -d} \otimes_{\Ac_d} W_d,
\end{align*}
and 
\begin{align*}
\Ba_d\text{-mod} &\longrightarrow \Ac_d\text{-mod} \\  
W_0 &\longmapsto \Ac_{d, 0} \otimes_{\Ac_0} W_0 = \Ac_{d, 0} \otimes_{\Ba_d} W_0.
\end{align*}
Note that here we are implicitly considering the decomposition $\Aa = \Ba_d \times \Ba_d'$ induced from the fact that $\Ba_d$ is an ideal generated by a central idempotent, which allows us to consider $\Ba_d$ as either a subalgebra or a quotient of $\Aa$.

We check that these are inverse equivalences:
\[
W_d \; \mapsto \; \Ac_{0, -d} \otimes_{\Ac_d} W_d \; \mapsto \; \Ac_{d, 0} \otimes_{\Ac_0} \Ac_{0, -d} \otimes_{\Ac_d} W_d = \Ac_d \otimes_{\Ac_d} W_d \cong W_d. 
\]
Note that the last isomorphism relies on $\Ac_d$ having an identity element and a unital action on $W_d$. In the other direction, we have:
\[
W_0 \; \mapsto \; \Ac_{d, 0} \otimes_{\Ba_d} W_0 \; \mapsto \; \Ac_{0, -d} \otimes_{\Ac_d} \Ac_{d, 0} \otimes_{\Ba_d} W_0 = \Bc_d \otimes_{\Ba_d} W_0 = \Ba_d \otimes_{\Ba_d} W_0 \cong W_0, 
\]
again relying on the identity element in $\Ba_d$ and its unital action on $W_0$.
\end{proof}

The following corollary is immediate.

\begin{cor}\label{cor:connected-equiv} Let $\Ac$ be a Pierce algebra. Suppose that $\Ac_d$ is strongly unital, $\Ba_d$ is unital and $\Aa$ has no nontrivial idempotent ideals. Then, we have an equivalence of categories between $\Ac_d$-modules and $\Aa$-modules.
\end{cor}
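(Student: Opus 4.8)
The plan is to deduce this directly from \cref{thm:Acd_Bad}, whose hypotheses require $\Ac_d$ to have a strong identity element and $\Ba_d$ to be unital — both of which are assumed here. The only additional thing to verify is that, under the extra hypothesis that $\Aa$ has no nontrivial idempotent ideals, we in fact have $\Ba_d = \Aa$, so that the equivalence of $\Ac_d$-modules with $\Ba_d$-modules supplied by \cref{thm:Acd_Bad} becomes an equivalence of $\Ac_d$-modules with $\Aa$-modules.

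First I would invoke \cref{bad idempotent ideal}: since $\Ac_d$ admits a strong identity element, $\Ba_d$ is an idempotent two-sided ideal of $\Aa$, i.e.\ $\Ba_d^2 = \Ba_d$. By hypothesis the only idempotent ideals of $\Aa$ are the trivial ones, namely $0$ and $\Aa$ itself, so either $\Ba_d = 0$ or $\Ba_d = \Aa$. Next I would rule out $\Ba_d = 0$ using the assumption that $\Ba_d$ is unital: a unital ring is nonzero (its identity element is nonzero, since $1 \neq 0$ in a ring with identity by our conventions), so $\Ba_d \neq 0$. Hence $\Ba_d = \Aa$.

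Finally, substituting $\Ba_d = \Aa$ into the conclusion of \cref{thm:Acd_Bad} yields an equivalence of categories between $\Ac_d$-modules and $\Aa$-modules, which is exactly the assertion.

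The argument is essentially bookkeeping once \cref{thm:Acd_Bad} and \cref{bad idempotent ideal} are in hand; the only point requiring a moment's care is the dichotomy step, where one must use that "no nontrivial idempotent ideals" should be read as "the only idempotent ideals are $0$ and $\Aa$" and then eliminate the zero case via unitality of $\Ba_d$. I do not anticipate a genuine obstacle here — the substance of the result lives in \cref{thm:Acd_Bad} and the zig-zag algebra machinery preceding it.
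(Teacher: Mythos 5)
Your argument is the one the paper intends: the corollary is declared immediate from \cref{thm:Acd_Bad}, and your route --- \cref{bad idempotent ideal} gives $\Ba_d^2=\Ba_d$, the hypothesis on $\Aa$ forces $\Ba_d\in\{0,\Aa\}$, and then \cref{thm:Acd_Bad} applied with $\Ba_d=\Aa$ --- is the same dichotomy the authors spell out in the proof of \cref{thm:SpecialCase1}. One caveat about your elimination of the case $\Ba_d=0$: you argue that a unital ring is nonzero, but this clashes with the paper's explicit convention (remark after \cref{lem:V-has-d}) that the zero ring is unital. Under that convention the stated hypotheses do not by themselves exclude $\Ba_d=0$: for instance when $d$ is exceptional one has $\Ac_d=\Ba_d=0$, all hypotheses hold, and the asserted equivalence fails whenever $\Aa\neq 0$ --- which is exactly why \cref{thm:SpecialCase1} disposes of this case via non-exceptionality ($\Ac_d\neq 0$) rather than via unitality of $\Ba_d$. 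So either state explicitly that ``unital'' is taken to mean having an identity $1\neq 0$, or add the extra observation that $\Ba_d\neq 0$ (equivalently $\Ac_d\neq 0$) must be assumed or verified; with that point made, your proof is complete and matches the paper's.
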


We now specialize the above results to the case where the Peirce algebra $\Ac$ is the mode transition algebra of a VOA $V$. For a given admissible $V$-module $W=\oplus_{d\in \ZZ_{\ge 0}} W_d$, one may have that $W_d= 0$ for some $d$. This happens; for instance, for the Virasoro VOA, $V_c$ is considered as a module over itself which, by construction, has $(V_c)_1=0$ (see \cref{ExVir}).
Here, we show that if there is some module over $V$ with nonzero degree $d$ part, then under natural assumptions, Zhu's algebra and the $d$-th mode transition algebras share the same representation theory. We first introduce some terminology. For a VOA $V$, we say that an $\NN$-graded $V$-module $W=\oplus_{i \in \NN} W_i$ is \emph{generated in degree $0$} if there is a surjection $\PhiL(W_0) \to W$, where the functor $\PhiL$ is given in \eqref{eq:PhiL}.

\begin{lem} \label{lem:V-has-d}
For $V$ a VOA, the following are equivalent:
\begin{enumerate}[label=(\roman*)]
\item \label{general} There exists an $\NN$-graded $V$-module $W$, generated in degree $0$ with $W_d \neq 0$,
\item \label{specific} $\PhiL(\Aa)_d \neq 0$.
\end{enumerate}
Further, if $\Ac_d = \Ac_d(V)$ is strongly unital, these conditions are equivalent to $\Ac_d \neq 0$. 
\end{lem}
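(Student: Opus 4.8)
The plan is to prove the cycle of implications $\ref{general}\Rightarrow\ref{specific}\Rightarrow\ref{general}$, and then, under the hypothesis that $\Ac_d$ is strongly unital, to add the equivalence with the condition $\Ac_d\neq 0$. The implication $\ref{specific}\Rightarrow\ref{general}$ is essentially free: the module $W = \PhiL(\Aa)$ is an $\NN$-graded $V$-module generated in degree $0$ (its degree-$0$ part being $\Aa$, which is nonzero), so if its degree-$d$ part is nonzero we are done. Note that here we use that $\PhiL$ applied to a $V$-module produces a $V$-module generated in degree zero, which is the definition of $\PhiL$ recalled around \cref{eq:PhiL}.

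For $\ref{general}\Rightarrow\ref{specific}$, suppose $W$ is $\NN$-graded, generated in degree $0$, with $W_d\neq 0$. By definition of ``generated in degree $0$'' there is a surjection $\PhiL(W_0)\twoheadrightarrow W$ of $V$-modules; since this is a map of graded modules it is surjective in each degree, so $\PhiL(W_0)_d\twoheadrightarrow W_d\neq 0$, whence $\PhiL(W_0)_d\neq 0$. It remains to pass from $W_0$ to $\Aa$. The point is that $W_0$ is a module over $\Aa = \Aa(V)$, and $\PhiL$ is a functor (left adjoint to taking degree-zero part); I would use that $\PhiL(W_0) = \PhiL(\Aa)\otimes_\Aa W_0$ as graded $V$-modules, which is precisely the factorization displayed in \cref{eq:PhiL}. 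Then $\PhiL(W_0)_d = \PhiL(\Aa)_d\otimes_\Aa W_0$, and if $\PhiL(\Aa)_d=0$ this would force $\PhiL(W_0)_d = 0$, a contradiction. Hence $\PhiL(\Aa)_d\neq 0$.

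For the last sentence, assume $\Ac_d$ is strongly unital. One direction is the observation recorded just before this lemma: if $d$ is exceptional, then $\Ac_d$ is the zero ring — so if $\Ac_d\neq 0$ then $d$ is non-exceptional, i.e. \ref{general} holds; in fact more directly, if $\Ac_d \neq 0$ then since $\Ac_{0,-d}\otimes_{\Ac_d}\Ac_{d,0}\to\Aa$ and $\Ac_{d,0}\otimes_\Aa\Ac_{0,-d}\cong\Ac_{d,-d}=\Ac_d$ by \cref{Peirce}\ref{contract}, a nonzero $\Ac_d$ forces $\Ac_{0,-d}=\PhiL(\Aa)_d\neq 0$, which is \ref{specific}. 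Conversely, suppose \ref{specific} holds, i.e. $\Ac_{0,-d}=\PhiL(\Aa)_d\neq 0$ (and similarly $\Ac_{d,0}=\PhiR(\Aa)_d\neq 0$, which follows by the analogous argument, or by a duality/symmetry between $\PhiL$ and $\PhiR$). Since $\Ac_d$ is strongly unital, it has a strong identity $1_d$ acting as the identity on the right on $\Ac_{0,-d}$; so the action map $\Ac_{0,-d}\otimes\Ac_d \to \Ac_{0,-d}$ is surjective, and in particular $\Ac_d\neq 0$ since $\Ac_{0,-d}\neq 0$. This closes the equivalence.

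The main obstacle I anticipate is the identification $\PhiL(W_0)\cong\PhiL(\Aa)\otimes_\Aa W_0$ used in the step $\ref{general}\Rightarrow\ref{specific}$: one must be careful that $\PhiL$ is defined on $V$-modules but the relevant input is really the $\Aa$-module $W_0$, and that the base-change description \cref{eq:PhiL} is compatible with the grading so that taking degree-$d$ parts commutes with $\otimes_\Aa W_0$. This is handled in \cite{DGK2} but should be invoked explicitly. A secondary point of care is the asymmetry between the ``left'' objects $\Ac_{0,-d}$, $\PhiL$ and the ``right'' objects $\Ac_{d,0}$, $\PhiR$ in the final equivalence with $\Ac_d\neq 0$: one needs both $\Ac_{0,-d}$ and $\Ac_{d,0}$ to be nonzero to conclude $\Ac_d = \Ac_{d,0}\otimes_\Aa\Ac_{0,-d}\neq 0$ is not immediate from \ref{specific} alone without the strong unitality hypothesis, which is exactly why that hypothesis appears only in the last clause.
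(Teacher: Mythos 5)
Your argument is correct, but in the two substantive steps it takes a different route from the paper, and it has two cosmetic slips worth fixing. For \ref{general}$\Rightarrow$\ref{specific} the paper does not use the base-change identity $\PhiL(W_0)\cong \PhiL(\Aa)\otimes_\Aa W_0$: it chooses $w\in W_0$ with $\langle w\rangle_d\neq 0$ (possible since generation in degree $0$ gives $W_d=\sum_{w\in W_0}\langle w\rangle_d$) and extends $1\mapsto w$ to a graded $V$-module map $\PhiL(\Aa)\to W$ with image $\langle w\rangle$, yielding a surjection $\PhiL(\Aa)_d\twoheadrightarrow\langle w\rangle_d\neq 0$; your base-change argument is equally valid (it follows from \eqref{eq:PhiL} by associativity of the tensor product, with the grading carried entirely by the left factor), but the cyclic-vector argument avoids having to invoke that compatibility. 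For the final clause, the paper argues through the action of the strong identity on $W_d$, whereas you argue internally via its identity action on the corner bimodule equal to $\PhiL(\Aa)_d$, and conversely use $\Ac_d\cong\Ac_{d,0}\otimes_\Aa\Ac_{0,-d}$ (\eqref{eq:star-iso}, no unitality needed) to get \ref{specific} from $\Ac_d\neq0$; both are fine, and your observation that strong unitality is needed only in one direction matches the paper. The corrections: in the paper's conventions $\Ac_{d,0}=\PhiL(\Aa)_d$ and $\Ac_{0,-d}=\PhiR(\Aa)_{-d}$, so you have the two corners swapped --- harmless here, since the strong identity acts as the identity on both corners, but it should be stated correctly; and you should delete the opening appeal to ``if $d$ is exceptional then $\Ac_d$ is the zero ring,'' since that is part of \cref{thm:SpecialCase1}, which is deduced from this lemma --- your subsequent direct argument via \cref{Peirce}\eqref{contract} is the non-circular one, and the auxiliary claim $\PhiR(\Aa)_{-d}\neq0$ is never actually needed.
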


Note that the $0$-ring is unital, and observe that when $\Ac_d =0$, then by the isomorphism \eqref{eq:star-iso} the module $\Ac_{0,d}$ is also zero, and thus $\Ac_d$ is necessarily strongly unital.

\begin{proof}
Since $\PhiL(\Aa)$ is an $\NN$-graded $V$-module with degree $d$ component given by $\PhiL(\Aa)_d$, then clearly \ref{specific} implies \ref{general}. 
On the other hand, if \ref{general} holds, so we have some $W$ as above with $W_d \neq 0$, we claim that for some $w \in W_0$, if we consider the submodule $\langle w \rangle$ generated by $w$, we have $\langle w \rangle_d \neq 0$ for some $w$. This is clear because $W_d = \sum_w \langle w\rangle_d \neq 0$. But now, if we choose such a $w$ and consider the map of $\Aa$-modules $\Aa \to W_0$ by $1 \mapsto w$, we see this extends to a $V$-module map $\PhiL(\Aa) \to W$ whose image is exactly $\langle w \rangle$. In particular, we get a surjection $\PhiL(\Aa)_d \to \langle w \rangle_d$, showing that $\PhiL(\Aa)_d \neq 0$ and so \ref{specific} holds.

As the identity element element of $\Ac_d$ has a nonzero action on $W_d$, it is not the zero ring when such a module $W$ exists. Conversely, from the expression $\Ac_d = \PhiL(\Aa)_d \otimes_{\Aa} \PhiR(\Aa)_{-d}$, we see that  $\Ac_d = 0$ when $\PhiL(\Aa)_d = 0$.
\end{proof}

\begin{defn}\label{Def:Vanishes}  We say that \emph{$d$ is non-exceptional for $V$} if one of the equivalent conditions of \cref{lem:V-has-d} are satisfied. Otherwise, we say that \emph{$d$ is exceptional for $V$}.
\end{defn}

\begin{thm}\label{thm:SpecialCase1} Let $V$ be a VOA such that $\Ac_d$ is strongly unital and its Zhu algebra $\Aa$ is commutative, Noetherian, and connected.
\begin{enumerate}[label=(\arabic*)]
\item \label{it.a1}  If $d$ is non-exceptional for $V$, then there is an equivalence of categories between $\Ac_d$-modules and $\Aa$-modules. 
\item If $d$ is exceptional for $V$ then $\Ac_d$ is the zero ring. \end{enumerate}
\end{thm}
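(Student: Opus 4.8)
The plan is to treat the two cases separately, using \cref{lem:V-has-d} for the exceptional case and, for the non-exceptional one, pinning down the ideal $\Ba_d \triangleleft \Aa$ precisely enough to invoke \cref{cor:connected-equiv}. The exceptional case is immediate: if $d$ is exceptional for $V$ then $\PhiL(\Aa)_d = 0$ by \cref{lem:V-has-d}, and since $\Ac_d = \PhiL(\Aa)_d \otimes_{\Aa} \PhiR(\Aa)_{-d}$ (as recorded in the proof of \cref{lem:V-has-d}), we get $\Ac_d = 0$, which is part (2).

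For the non-exceptional case, the strategy is to show $\Ba_d = \Aa$ and then apply \cref{cor:connected-equiv} (equivalently \cref{thm:Acd_Bad} with $\Ba_d = \Aa$). Because $\Ac_d$ is strongly unital, \cref{bad idempotent ideal} shows that $\Ba_d$ is an idempotent two-sided ideal of $\Aa$. Since $\Aa$ is commutative and Noetherian, this ideal is finitely generated, hence by Example~\ref{commutative connected example} it is generated by an idempotent of $\Aa$; connectedness of $\Aa$ then forces this idempotent, and so $\Ba_d$ itself, to be $0$ or $\Aa$. In particular $\Aa$ has no nontrivial idempotent ideals, which is the remaining hypothesis needed for \cref{cor:connected-equiv}, so the proof comes down to excluding the possibility $\Ba_d = 0$.

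To do this, suppose $\Ba_d = 0$ and let $1_d \in \Ac_d$ be a strong identity; since $\Ac_d$ is strongly unital it is a unital ring with identity $1_d$, so $1_d \star 1_d = 1_d$. Using the isomorphism \eqref{eq:star-iso} in degree $i = j = d$, write $1_d = \sum_k v_k \star u_k$ with $v_k \in \Ac_{d,0}$ and $u_k \in \Ac_{0,-d}$. Expanding $1_d = 1_d \star 1_d$ and regrouping by associativity of $\star$ gives
\[ 1_d = \sum_{k,\ell} v_k \star (u_k \star v_\ell) \star u_\ell, \]
where each inner term $u_k \star v_\ell$ lies in $\Ac_{0,-d} \star \Ac_{d,0} \subseteq \Ac_{0,0} = \Aa$, in fact in the image of the pairing \eqref{eq:star-zero}, which by \cref{def:ZZ} is exactly $\Ba_d = 0$. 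Hence $1_d = 0$ and $\Ac_d = 0$; but $d$ is non-exceptional and $\Ac_d$ is strongly unital, so by the last clause of \cref{lem:V-has-d} we have $\Ac_d \neq 0$, a contradiction. Therefore $\Ba_d = \Aa$ is unital, and \cref{cor:connected-equiv} yields the equivalence between $\Ac_d$-modules and $\Aa$-modules.

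The only genuinely nontrivial step is the exclusion of $\Ba_d = 0$; beyond that, the remaining obstacle is purely a matter of care, namely verifying that a strong identity $1_d$ really is idempotent in $\Ac_d$ (this is precisely what strong unitality of $\Ac_d$ provides) and that a product of an element of $\Ac_{0,-d}$ with one of $\Ac_{d,0}$ lands in $\Ba_d$ by the very definition of $\Ba_d$. Everything else is assembled directly from \cref{lem:V-has-d}, \cref{bad idempotent ideal}, Example~\ref{commutative connected example}, and \cref{cor:connected-equiv}.
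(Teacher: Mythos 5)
Your proof is correct and follows essentially the same route as the paper: handle the exceptional case via \cref{lem:V-has-d}, then use \cref{bad idempotent ideal}, Example~\ref{commutative connected example} (with Noetherianity supplying finite generation) and connectedness to force $\Ba_d \in \{0, \Aa\}$, and conclude via \cref{thm:Acd_Bad}/\cref{cor:connected-equiv}. The only difference is cosmetic: where the paper infers $\Ba_d \neq 0$ from $\Ac_d \neq 0$ through the established equivalence, you verify it directly by expanding $1_d = 1_d \star 1_d$ through \eqref{eq:star-iso} and \eqref{eq:star-zero}, which is a valid (and slightly more explicit) justification of the same step.
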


\begin{proof} From \cref{lem:V-has-d} it is enough to prove \ref{it.a1} and assume that $\Ac_d$ is not the zero ring. By \cref{bad idempotent ideal}, $\Ba_d$ is an idempotent ideal of $\Ac_d$ and, as observed in Example~\ref{commutative connected example}, 
the ideal $\Ba_d$ is generated by a central idempotent. Since $\Aa$ is connected, this implies $\Ba_d = \Aa$ or $\Ba_d = 0$. In either case, we obtain an equivalence of categories between $\Ac_d$-modules and $\Ba_d$-modules from \cref{{thm:Acd_Bad}}. But since $\Ac_d \neq 0$, then necessarily also $\Ba_d \neq 0$ and thus $\Ba_d \cong \Aa$, concluding the argument. 
\end{proof}

\section{Rationality, induced modules, and contragredients}\label{sec:RatIndCons}
As we show here in this section, we can use the properties of the mode transition algebra $\Ac$ and our results on Morita equivalences to understand induced modules, properties of contragredient modules, and ultimately, to detect whether $V$ is rational  as described in \cref{thm:equivalences}. 

To begin with we prove the following result.
\begin{prop} \label{cor:SpecialCase2} For a rational VOA $V$, we have an equivalence of categories between $\Ac_d$-modules and $\Bc_d$-modules.
\end{prop}

\begin{proof}
By \cite[Remark 3.4.6]{DGK2}, for rational vertex operator algebras $V$ of CFT-type, $\Ac_d$ is strongly unital, and so by \cref{bad idempotent ideal}, $\Ba_d$ is an idempotent ideal of $\Ac_d$. By Example~\ref{rtl example}, 
the ideal $\Ba_d$ is unital, and the result follows from \cref{{thm:Acd_Bad}}.
\end{proof}

\begin{defn}\label{def:psiMap}
    For an $\Aa$-module $\Maa$, as $\cd\Maa$ can be identified with both the degree $0$ components of $\PhiL(\cd\Maa)$ and $\PhiL(\Maa)'$, the universal property of induced modules (\cite[Proposition B.1.4]{DGK2}) gives a canonical map $\D{\Maa}: \PhiL(\cd\Maa) \to \PhiL(\Maa)'$. 
    Similarly, as we may identify the degree $0$ part of $\PhiL(\cd\Maa)'$ with 
    $\cdd\Maa$, the canonical map $\Maa \to \cdd\Maa$ induces a canonical map $\DD{\Maa} \colon \PhiL(\Maa) \to \PhiL(\cd\Maa)'$.
\end{defn}

\begin{remark}\label{rk:Maps}
We observe the following properties of the maps $\D{\Maa}$ and $\DD{\Maa}$.

\begin{enumerate}[label=(\ref*{rk:Maps}.\alph*)]    
    \item If $\Maa$ is an infinite-dimensional,  while $\D{\Maa}$ may be an isomorphism,  $\DD{\Maa}$ may never be so.

\item \label{simple dual remark} As   $\DD\Maa$ and $\D\Maa$ are nonzero maps, $\DD\Maa$ will be an isomorphism when both $\PhiL(\Maa)$ and $\PhiL(\cd\Maa)'$ are simple, and $\D\Maa$ will be an isomorphism when both $\PhiL(\cd\Maa)$ and $\PhiL(\Maa)'$ are simple. 

\item \label{finite dimensional duals} If $\Maa$ is a finite dimensional $\Aa$-module, then we have a canonical isomorphism $\Maa \overset \sim\to \cdd\Maa$. In particular, if we precompose $\DD{\cd\Maa}$ with the identification $\PhiL(\Maa) \to \PhiL(\cdd\Maa)$ induced by this isomorphism in degree $0$, we see there is a natural identification $\DD{\cd\Maa} = \D{\Maa}$. Applying this reasoning to $\cd\Maa$, we obtain that $\D{\cd\Maa} = \DD{\Maa}$.
\end{enumerate}
\end{remark}

\begin{remark}\label{countable dimension}
We note that (see \cite{DongMasonRadical}), if $V$ is a VOA of CFT-type and $W$ is a countably generated weak $V$-module, then $W$ has countable dimension. This follows from the fact that there are a countable number of finite strings of operators of the form $a_{(m)}$ where $a \in V$ and $m \in \ZZ$. Consequently, as $\PhiL(\Aa)$ is a finitely generated $V$-module, $\Aa$ always has countable dimension.
\end{remark}

\begin{remark} \label{semisimple is finite}
One can show from \cref{countable dimension} that if $\Aa$ is semisimple, it is also finite-dimensional. To see this, we note that by Wedderburn-Artin, $\Aa$ is a finite product of matrix algebras with entries in division algebras. It suffices to show that any such division algebra $D$ containing $\CC$ must either be $\CC$ or have uncountable dimension over $\CC$. But for this, suppose $d \in D \setminus \CC$. As $\CC$ is algebraically closed, and as rational expressions in $d$ with coefficients in $\CC$ must commute with each other, it follows that $\CC(d) \subset D$ is isomorphic to the field of rational functions in one variable. But then we see that the expressions $1/(d - \lambda)$ are $\CC$-linearly independent as $\lambda$ varies, showing the space has uncountable dimension
\end{remark}
\begin{lem} \label{double dual simplicity}
Let $V$ be a VOA with Zhu algebra $\Aa$, and suppose that $\Maa$ is a simple $\Aa$-module such that the canonical map $\DD{\Maa}$ is an isomorphism. Then $\Maa$ is finite-dimensional, and the $V$-modules $\PhiL(\Maa)$, $\PhiL(\cd{\Maa})$, and $\PhiL(\cd{\Maa})'$  are all simple.
\end{lem}

\begin{proof}
First note that as the map $\DD\Maa$ on the degree $0$ parts is, on the level of vector spaces, the canonical map $\Maa \to \Maa^{\vee\vee}$ to the double dual, and the fact that it is an isomorphism implies that $\Maa$ is finite-dimensional. We also note that the functor $() \mapsto \cd{()}$ is an anti-equivalence on the category of finite dimensional $\Aa$ modules, taking subobjects to quotients. It then follows that $\cd\Maa$ is also a simple and finite-dimensional $\Aa$-module.

By \cite{ZhuMod}, we have short exact sequences
\[0 \to R \to \PhiL(\Maa) \to S \to 0,\]
\[0 \to \til R \to \PhiL(\cd\Maa) \to \til S \to 0,\]
  where $S$ and $\til S$ are simple $V$-modules, and the map on degree $0$ parts $\Maa \to S_0$ and $\cd\Maa \to \til S_0$ are isomorphisms, and consequently $R_0 = \til R_0 = 0$. Taking the contragradient modules in the second sequence and identifying $\PhiL(\cd\Maa)'_0 = \Maa$, we get a diagram
  \[\xymatrix{
  0 \ar[r] & R \ar[r] & \PhiL(\Maa) \ar[d]_{\DD{\Maa}} \ar[r] & S \ar[r] & 0 \\
  0 \ar[r] & \til S' \ar[r] & \PhiL(\cd\Maa)' \ar[r] & \til R' \ar[r] & 0
  }\]
The $V$-module $\PhiL({\Maa})$ is generated by its elements of degree $0$, and  $\til R_0 = 0 =  \til R'_0$. It then follows that composite map $\PhiL({\Maa}) \to \til R'$ is zero, and hence the image of the map $\psi_{{\Maa}}$ lies entirely in $\til S'$. But as $\psi_{{\Maa}}$ is an isomorphism, it must, in particular, be surjective, which shows $\til R' = 0$ and $\til S' \cong \PhiL(\Maa)$. But therefore $\til R = 0$ and $\PhiL(\cd\Maa) \cong \til S$ is simple. And again, as $\PhiL(\Maa) \cong \PhiL(\cd\Maa)'$, we have that $\PhiL(\Maa)$ and $\PhiL(\cd\Maa)'$ are also both simple, as claimed.
  \end{proof}
  \begin{lem}\label{lem:Ordinary}
 Let $V$ be a VOA with Zhu algebra $\Aa$, and suppose that $\Maa$ is a simple $\Aa$-module such that the canonical maps $\DD{\Maa}$ and $\DD{\cd\Maa}$ are both isomorphisms. Then $\PhiL(\Maa)$ is ordinary.
  \end{lem}

\begin{proof}
By \cref{double dual simplicity}, $\PhiL(\Maa)$ and $\PhiL(\cd\Maa)$ are simple and $\Maa$ and $\cd\Maa$ are finite dimensional.
  Consider the natural map $\PhiL(\Maa) \to \PhiL(\Maa)''$. As $\Maa$ is finite-dimensional, the natural map $\Maa \to \cdd{\Maa}$ is an isomorphism, and we may consider the map obtained by composing:
\begin{equation}\label{double dual mapping}
\PhiL(\Maa) \to \Phi\left(\cdd{\Maa}\right)''.  
\end{equation}
Using the fact that morphisms from induced modules are identified with maps on the degree $0$ parts as $\Aa$-modules, we note that we can factor this map as
\[\xymatrix{
\PhiL(\Maa) \ar[rr] \ar[rd]_{\psi_\Maa} & &  \PhiL\left(\cdd{\Maa}\right)'' \\
& \PhiL(\cd\Maa)' \ar[ru]_{(\psi_{\cd\Maa})'}
}\]
  By hypothesis, the canonical maps $\DD{\Maa}$ and $\DD{\cd\Maa}$ are isomorphisms, and we can also conclude that  $(\DD{\cd\Maa})'$ is an isomorphism. So, by the diagram,  the map of \eqref{double dual mapping} is an isomorphism. But this is the map between $\PhiL(\Maa)_d$ and its double dual in each degree. As these are isomorphisms, each space must be finite-dimensional; hence, $\PhiL(\Maa)$ is ordinary, as claimed. 
\end{proof}
The following result is a straightforward consequence of \cref{double dual simplicity} and \cref{lem:Ordinary}.

\begin{prop}\label{prop:want modules2}
    Let $V$ be a VOA, and $\Maa$ a simple $\Aa$-module.  The following are equivalent:
    \begin{enumerate}[label=(\alph*)]    
        \item \label{it:moda} $\DD{\Maa}$  and $\D{\Maa}$ are isomorphisms;
        \item \label{it:modb} $\PhiL(\Maa)$ and $\PhiL(\cd\Maa)$ are simple ordinary modules and $\Maa$ is finite-dimensional.
    \end{enumerate}
    \end{prop}
  \begin{proof}
        That \ref{it:moda} implies \ref{it:modb} follows from \cref{double dual simplicity} and \cref{lem:Ordinary}. In particular, by \cref{double dual simplicity}, the assumption that $\DD{\Maa}$ is an isomorphism gives that $\Maa$ is finite-dimensional and the $V$-modules $\PhiL(\Maa)$ and $\PhiL(\cd{\Maa})$   are  simple. Since $\Maa$ is finite-dimensional, $\D{\Maa}=\DD{\cd\Maa}$.  Then again, by \cref{double dual simplicity}, the assumption $\DD{\cd\Maa}$ is an isomorphism gives that $\cd\Maa$ is finite-dimensional. Taken together, by \cref{lem:Ordinary}, since  $\Maa$ and $\cd\Maa$ are finite dimensional, and $\DD{\cd\Maa}$ and $\DD{\Maa}$  are isomorphisms,  we conclude that $\PhiL(\Maa)$ and $\PhiL(\cd{\Maa})$ are ordinary.

        That \ref{it:modb} implies \ref{it:moda} follows from  Remark~\ref{simple dual remark}.
    \end{proof}  

\begin{prop} \label{prop:we're out here making units}
Let $V$ be a VOA whose Zhu algebra $\Aa$ is semisimple. Suppose that for any simple $\Aa$-module $\Maa$, the natural map  $\D\Maa \colon \PhiL(\cd{\Maa}) \to (\PhiL(\Maa))'$ is an isomorphism. Then $\Ac_d$ is strongly unital for every $d \in \NN$.
\end{prop}

\begin{proof}
Recall that since $\Aa$ is semisimple, one has that $\Aa$ is finite-dimensional (see \cref{semisimple is finite}). In particular, so is every simple $\Aa$-module. Hence, by Remark~\ref{finite dimensional duals}, it follows that we may identify $\DD\Maa = \D{\cd\Maa}$ for any simple $\Aa$-module $\Maa$, and in particular, we have that also $\D{\Maa}$ is an isomorphism. We can, therefore, apply \cref{lem:Ordinary} and obtain that $\PhiL(\Maa)$ is ordinary for any $\Maa$ simple.

 We will now explicitly construct the strong identity element of $\Ac_d$, generalizing the argument made in \cite[Remark 3.4.6]{DGK2} made for $V$ rational.  Since by assumption $\Aa$ is finite and semi-simple, there is a bimodule decomposition \begin{equation}\label{eq:A1}
    \Aa\cong  \prod_{k=1}^{m} S_0^k\otimes (S_0^k)^\vee,
\end{equation} 
where $\{\Ma^1_0, \dots,S_0^k\}$ is the set of isomorphism classes of irreducible left $\Aa$-modules. Let $S^k = \PhiL(\Ma^k_0)$ be the induced modules, which, by \cref{double dual simplicity} are simple, and by the argument above, are ordinary.

Applying the functor $\Phi$ to \eqref{eq:A1} we obtain \begin{equation}\label{eq:A2}
    \Ac = \Phi(\Aa)= \prod_{k=1}^m \PhiL(S_0^k) \otimes_{\CC} \PhiR(S_0^k)^\vee.
\end{equation} 
Via the involution $\theta$, we may identify the  right indecomposable $\Aa$-modules $(S^k_0)^\vee$ as  left indecomposable $\Aa$-modules $\mbox{}^\theta(S^k_0)^\vee$. Moreover, in \cite[Lemma 1.2]{DGK2} we proved that $\PhiL$ takes indecomposable $\Aa$-modules to indecomposable $V$-modules.  Hence, we obtain a natural identification of indecomposable $V$-modules:
\begin{equation*}\label{eq:A3} \PhiL(\mbox{}^\theta(S^k_0)^\vee)= \PhiR((S^k_0)^\vee) \qquad \text{and} \qquad \PhiL(\mbox{}^\theta(S^k_0)^\vee)_d= \PhiR((S^k_0)^\vee)_{-d},\end{equation*} for every $d \in \NN$. 
Now for each $k\in \{1,\ldots, m\}$,  the contragredient module $(S^k)'$ has degree zero component $(S^k)'_0 = \mbox{}^\theta(S^k_0)^\vee$.  One obtains from this a natural map 
\begin{equation*}    \PhiR((S^k_0)^\vee)=\PhiL(\mbox{}^\theta(S^k_0)^\vee)\to (S^k)',
\end{equation*}
which, by assumption, is an isomorphism. In particular, one has \begin{equation*}\label{eq:A4}\PhiR((S^k_0)^\vee)_{-d}=(S^k_{d})^\vee.\end{equation*}
and so  we can rewrite \eqref{eq:A2} as 
\begin{equation} \label{eq:Acrtl}
    \Ac =\bigoplus_{d\in \ZZ_{\ge 0}}\prod_{k=1}^m S^k_d \otimes_{\CC}(S^k_{d})^\vee.
\end{equation} 
As $S$ and $S'$ are ordinary, we have $\dim_\CC S_d^k$, and $\dim_\CC {S_d^k}^\vee$ are finite. Unraveling the definition of the $\star$-product of $\Ac$ (see the last paragraph of \cite[Remark 3.4.6]{DGK2}), we obtain a ring isomorphism 
\[\Ac_d\cong \prod_{i=1}^m \End_\CC(S^k_d),\] and we conclude that ${\textbf{1}}_d:=\prod_{i=1}^m \text{Id}_{S^k_d}$ is its strong identity element.
\end{proof}

\begin{thm} \label{thm:equivalences}
For $V$ with semi-simple Zhu algebra $\Aa$, the following conditions are equivalent
\begin{enumerate}[label=(\alph*)]
\item \label{thm:a} \label{NTIII} For every simple $\Aa$-module $\Maa$, the generalized Verma module $\PhiL(\Maa)$ is simple and ordinary.
\item \label{thm:b} For every simple $\Aa$-module $\Maa$, the map 
 $\D{\Maa} \coloneqq \PhiL(\cd{\Maa}) \to (\PhiL(\Maa))'$ is an isomorphism.
 \item \label{it:stun} $\Ac_d$ is strongly unital for every $d \in \NN$.
\item \label{it:rtl} $V$ is rational.
\end{enumerate}
\end{thm}

\begin{proof} By assumption, $\Aa$ is semisimple, and hence is finite-dimensional (see \cref{semisimple is finite}).   As already noted in the proof of \cref{prop:we're out here making units}, since $\Aa$ is finite, then necessarily every simple $\Aa$-module is finite-dimensional and so assuming that $\D{\Maa}$ is an isomorphism for every simple $\Aa$-module $\Maa$ is equivalent to $\DD{\Maa}$ being an isomorphism. We, therefore, deduce that \ref{NTIII} and \ref{thm:b} are equivalent by \cref{prop:want modules2}.  That \ref{thm:b} implies \ref{it:stun} follows from \cref{prop:we're out here making units}.

To complete our argument, we will show that \ref{it:stun} implies \ref{it:rtl} and \ref{it:rtl} implies \ref{NTIII}. For the latter, suppose $V$ is rational and let $\Maa$ be a simple $A$-module. As $\PhiL(\Maa)$ is indecomposable and $V$ is rational, it must, in fact, be simple (see also \cite[Section 1.7]{DGT2}). But as $V$ is rational, by definition, all simple modules are ordinary, showing that \ref{NTIII}  holds. Although we will not use it, we note that \cite[Remark 3.4.6]{DGK2} shows that \ref{it:rtl} implies \ref{it:stun}.

Finally, we show that \ref{it:stun} implies \ref{it:rtl}.
In view of \cite[Theorem 4.11]{DongLiMason:HigherZhu}, to show that $V$ is rational, it suffices to show that $\Aa_d$ is finite and semisimple for every $d \in \NN$. By \cite[Theorem 6.0.1 (a)]{DGK2}, since the $\Ac_d$ admit identity elements for all $d$, one has a ring isomorphism $\Aa_d\cong \prod_{e=0}^d\Ac_e$. It, therefore, remains to show that the mode transition algebras $\Ac_d$ are finite and semisimple for every $d \in \NN$. By \cref{bad idempotent ideal}, one has that $\Ba_d$ is an idempotent ideal of $\Ac_d$. As observed in Example~\ref{ss example}, 
the ideal $\Ba_d$ is unital, and therefore by \cref{thm:Acd_Bad}, one has that $\Ac_d$  and $\Ba_d$ are Morita equivalent. 

This equivalence of module categories implies that one module category is semisimple if and only if the other is, and hence, one obtains the corresponding statement for the rings. We also note that by Wedderburn-Artin theory, these rings being finite dimensional $\CC$-algebras is equivalent to their centers being finite products of copies of the complex numbers. But as the centers of these algebras coincide with the centers of their module categories (i.e., the endomorphisms of the corresponding identity functors), it follows that one is finite-dimensional if and only if the other one is.

Therefore, it suffices to show that $\Ba_d$ is finite and semisimple. But this follows from the Wedderburn--Artin Theorem: $\Aa$ is a product of simple finite dimensional algebras. So every two-sided ideal of $\Aa$ is a product of a subset of those simple finite dimensional algebras, and so is semisimple and finite-dimensional. 
\end{proof}

\section{Algebraic consequences of Morita equivalence}\label{Sec:Algebraic}
Here, we prove  \cref{Cor2} and \cref{Cor1},  in which explicit expressions for higher-level Zhu algebras are given in case the $\Ac_d$ are strongly unital for all $d$. These generalize the statement for higher level Zhu algebras for the
Heisenberg VOA, predicted by Addabbo and Barron \cite[Conjecture 8.1]{ABCon},  checked for $d=1$ in \cite[Remark 4.2]{BVY}, and $d=2$ in \cite[Theorem 7.1]{ABCon}, and proved for all $d$ in \cite[Corollary 7.3.1]{DGK2}. 

The proofs of \cref{Cor2} and \cref{Cor1} rely on \cite[Theorem 6.0.1]{DGK2}, where it is shown that if $\Ac_d$ is unital for all $d$, then one has a ring isomorphism 
\begin{equation}\label{eq:Decomp}
    \Aa_d\cong \prod_{e=0}^d\Ac_e.
\end{equation}

 \begin{cor}\label{Cor2} Let $V$ be a rational VOA. Then, for every $d \in \NN$ one has  \[\Aa_d(V)\cong \prod_{j=0}^d \prod_{i=1}^m {\Mat}_{D^i_j}(\CC),\] where 
 $D^i_j={\dim}(S^i_j)$. 
\end{cor}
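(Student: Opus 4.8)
The plan is to combine the Morita-type result for the $d$-th mode transition algebra of a rational VOA with the general fact, quoted from \cite{DGK2}, that when every $\Ac_e$ is strongly unital one has a ring isomorphism $\Aa_d \cong \prod_{e=0}^d \Ac_e$. Concretely, first I would invoke \cite[Remark 3.4.6]{DGK2} to record that for a rational $V$ the algebra $\Ac_e$ is strongly unital for every $e$, so the isomorphism $\Aa_d \cong \prod_{e=0}^d \Ac_e$ applies. Thus it suffices to identify each factor $\Ac_e$ with $\prod_{i=1}^m \Mat_{D^i_e}(\CC)$, after which the displayed formula follows by reindexing the double product (renaming $e$ as $j$).

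For the identification of $\Ac_e$, the key input is the decomposition $\Ac_e \cong \prod_{i=1}^m \Hom_\CC(S^i_e, S^i_e)$ that is recalled in the excerpt just before the statement. Since $S^i_e$ is a finite-dimensional complex vector space of dimension $D^i_e = \dim(S^i_e)$ — finiteness coming from the rationality (in particular $C_2$-cofiniteness) hypothesis, which forces admissible modules to have finite-dimensional graded pieces — we have $\Hom_\CC(S^i_e, S^i_e) \cong \Mat_{D^i_e}(\CC)$ as associative $\CC$-algebras. Assembling these over $i$ gives $\Ac_e \cong \prod_{i=1}^m \Mat_{D^i_e}(\CC)$. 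Note that factors with $D^i_e = 0$ contribute the zero ring, consistent with the convention $\Mat_0(\CC) = 0$, and with the fact that such $e$ may be exceptional for the module $S^i$; no special care is needed since the product simply drops those factors.

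Putting the two steps together:
\[
\Aa_d(V) \;\cong\; \prod_{e=0}^d \Ac_e \;\cong\; \prod_{e=0}^d \prod_{i=1}^m \Mat_{D^i_e}(\CC) \;=\; \prod_{j=0}^d \prod_{i=1}^m \Mat_{D^i_j}(\CC),
\]
which is exactly the claimed expression with $D^i_j = \dim(S^i_j)$.

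I do not expect a serious obstacle here: the proof is essentially a bookkeeping assembly of results already established or cited in the paper (the isomorphism $\Aa_d \cong \prod_{e=0}^d \Ac_e$ from \cite{DGK2}, strong unitality of the $\Ac_e$ for rational $V$ from \cite[Remark 3.4.6]{DGK2}, and the endomorphism-algebra decomposition of $\Ac_e$). The only point requiring a word of care is justifying that each $S^i_e$ is finite-dimensional so that $\Hom_\CC(S^i_e, S^i_e)$ is genuinely a finite matrix algebra; this is standard for rational (hence $C_2$-cofinite) VOAs, where all irreducible admissible modules are $\ZZ_{\ge 0}$-graded with finite-dimensional homogeneous components and there are only finitely many of them, which is precisely what allows the finite products indexed by $i \in \{1,\dots,m\}$ and the finite matrix sizes $D^i_j$.
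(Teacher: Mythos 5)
Your proposal is correct and matches the paper's own argument: the paper likewise combines the isomorphism $\Aa_d \cong \prod_{e=0}^d \Ac_e$ (valid since strong unitality of each $\Ac_e$ for rational $V$ is given by \cite[Remark 3.4.6]{DGK2}) with the decomposition $\Ac_e \cong \prod_{i=1}^m \Hom_\CC(S^i_e, S^i_e)$ recalled just before the statement. Your added remarks on finite-dimensionality of the graded pieces and the zero-ring factors are fine but not part of the paper's (very brief) proof.
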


 \begin{proof}Mode transition algebras $\Ac_d$ for rational VOAs are strongly unital for every $d$ by \cite[Remark 3.4.6]{DGK2}.  One has, by the proof of \cref{prop:we're out here making units}, the decomposition
\[ \Ac_d \cong \prod_{i=1}^m \Hom_\CC(S^i_d,S^i_d),
\] where $\{1,\dots, m\}$ is the set indexing the isomorphism classes of simple $V$-modules and $S^i_d$ is the $d$-th graded piece of $S^i$, namely $S^i=\bigoplus_{j \in \NN} S^i_j$. Combining this with \eqref{eq:Decomp}, one obtains the isomorphism.\end{proof}

We note that the proof of  \cref{Cor1} specializes to give an alternative argument for \cite[Corollary 7.3.1]{DGK2}.   

\begin{cor}\label{Cor1} Let $V$ be a VOA that is simple and generated in degree zero as a module over itself. Let $d$ be a positive integer and suppose that, for all $j \leq d$,  $j$ is non-exceptional for $V$ and $\Ac_j$ is strongly unital. Assume further that $\Aa$ is commutative, connected, Noetherian, and every projective $\Aa$-module is free. Then \[\Ac_j \cong {\Mat}_{D(j)}(\Aa(V)) \quad \text{and thus} \quad \Aa_d(V)\cong \prod_{j=0}^d {\Mat}_{D(j)}(\Aa(V)),\]
 where $D(j)={\dim}_{\CC}(V_j)$.
 \end{cor}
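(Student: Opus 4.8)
The plan is to reduce the statement to \cref{Cor2}'s bookkeeping by first establishing that each $\Ac_j$ is Morita equivalent to $\Aa$, and then upgrading this Morita equivalence to an honest isomorphism $\Ac_j \cong \Mat_{D(j)}(\Aa)$ using the hypothesis that projective $\Aa$-modules are free. First I would invoke \cref{thm:SpecialCase1}: since $\Aa$ is commutative, Noetherian, and connected, and each $\Ac_j$ is strongly unital with $j$ non-exceptional for $j \le d$, we get an equivalence of categories between $\Ac_j$-modules and $\Aa$-modules for each such $j$. In the course of proving that theorem one sees $\Ba_j = \Aa$, so the equivalence is implemented (in the notation of \cref{thm:Acd_Bad}) by the bimodule $\Ac_{j,0}$, which is a finitely generated right $\Ac_j$-module and a left $\Aa$-module; the inverse is given by $\Ac_{0,-j}$. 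Standard Morita theory then identifies $\Ac_j$ with $\End_{\Aa}(\Ac_{0,-j})$, where $\Ac_{0,-j}$ is a finitely generated projective $\Aa$-module — finitely generated because $V$ has finite-dimensional graded components, and projective because it is the image of $\Aa$ under a category equivalence (equivalently, $\Ac_{0,-j}\otimes_{\Ac_j}\Ac_{j,0}\cong\Aa$ exhibits it as a direct summand of a free module via the dual generator $\Ac_{j,0}$).

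Next I would pin down the rank of the projective module $P_j := \Ac_{0,-j}$. Since $\Aa$ is connected, $P_j$ has a well-defined constant rank, say $r_j$, and $\End_{\Aa}(P_j)$ has $\Aa$-rank $r_j^2$; but more usefully, after base change along $\Aa \twoheadrightarrow \Aa/\mathfrak{m} = \CC$ at any maximal ideal (equivalently, specializing to the trivial $\Aa$-module, which as a $V$-module is $V$ itself since $\Aa$ commutative forces $V_0 \cong \CC$ and $V$ of CFT-type), the module $P_j \otimes_\Aa \CC$ is identified with $\PhiL(\CC)_j = $ the degree-$j$ part of the module generated in degree $0$ from $V_0$, which is a quotient of — and here, since $V$ is simple as a module over itself and generated in degree zero, it is actually isomorphic to — $V_j$. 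Hence $r_j = \dim_\CC V_j = D(j)$. (This is the step where I expect to lean hardest on the hypotheses: I need $V$ simple and generated in degree $0$ so that the universal module $\PhiL(\CC)$ is $V$ itself, so that $P_j$ specializes to $V_j$ rather than merely surjecting onto it; without simplicity one would only get that $D(j)$ bounds the rank from above.)

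Finally, because every projective $\Aa$-module is free, $P_j$ is free of rank $D(j)$, so $\Ac_j \cong \End_\Aa(P_j) \cong \End_\Aa(\Aa^{D(j)}) \cong \Mat_{D(j)}(\Aa)$, giving the first displayed isomorphism. For the second, I would apply the result cited from \cite{DGK2} at the end of the excerpt: since all the $\Ac_j$ with $j \le d$ admit strong identity elements, one has $\Aa_d(V) \cong \prod_{j=0}^{d} \Ac_j$ (the $j=0$ factor being $\Ac_0 = \Aa = \Mat_1(\Aa) = \Mat_{D(0)}(\Aa)$, consistent since $D(0) = \dim V_0 = 1$). Substituting the isomorphisms $\Ac_j \cong \Mat_{D(j)}(\Aa)$ yields $\Aa_d(V) \cong \prod_{j=0}^d \Mat_{D(j)}(\Aa)$, as claimed. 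The main obstacle, as noted, is the rank computation: correctly identifying the fiber of the projective bimodule $\Ac_{0,-j}$ with $V_j$, which is exactly where simplicity of $V$ as a self-module enters and cannot be dropped.
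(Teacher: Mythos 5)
Your proposal is correct and follows essentially the same route as the paper: Morita equivalence via \cref{cor:connected-equiv} (through \cref{thm:SpecialCase1}), the identification $\Ac_j \cong \End_\Aa(\Pa)$ with $\Pa$ a projective generator which is free by hypothesis, the rank pinned down by specializing at an ideal $\Ia$ with $\Aa/\Ia \cong \CC$ and identifying $V$ with $\PhiL(\Aa/\Ia)$, and finally the decomposition $\Aa_d \cong \prod_{j=0}^d \Ac_j$ from \cite{DGK2}. The only (cosmetic) difference is that the paper computes $\dim_\CC(\Ac_j \otimes_\Aa \Aa/\Ia) = (\dim_\CC V_j)^2 = D(j)^2$ directly rather than specializing the bimodule $\Ac_{0,-j}$ itself.
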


\begin{proof} Since the $\Ac_j=\Ac_j(V)$ admit identity elements for all $j \leq d$, by \cite[Theorem 6.0.3 (a)]{DGK2}, one has that $\Aa_d\cong \prod_{j=0}^d\Ac_j$. Since they are strong identity elements, and we are in the assumptions of \cref{cor:connected-equiv}, the categories of $\Ac_j$-modules are Morita equivalent to the category of $\Aa$-modules for every $j \leq d$. This gives that $\Ac_j\cong {\End}_{\Aa}({\Pa})$, where ${\Pa}$ is a projective generator in the category of $\Aa$-modules \cite[Cor.22.4(c)]{AndersonFuller}.  Our assumption that projective $\Aa$-modules are free gives that ${\Pa}\cong \Aa^{D(j)}$, for some $D(j)$.  It follows that
     \[\Ac_j \cong {\End}_{\Aa}({\Pa}) \cong {\End}_{\Aa}(\Aa^{D(j)})\cong  {\Mat}_{D(j)}(\Aa).\]
We are left to prove that $D(j)=\dim_\CC(V_j)$. To do so, consider an ideal $\Ia \subset \Aa$ such that $\Aa/\Ia \cong \CC$ and note that we can identify $V$ with $\PhiL(\Aa/\Ia)$. Since $\Aa$ is commutative, then $\Ac_j$ is an $\Aa$-module and furthermore, by the description of $\Ac$ given in \eqref{eq:PhiL}, one has
\[\Ac_j\otimes_\Aa \Aa/\Ia \cong \PhiL(\Aa/\Ia)_{j}\otimes_{\Aa/\Ia}\PhiR(\Aa/\Ia)_{-j},\] 
and so 
     \[{\dim}_{\CC}(\Ac_j\otimes_\Aa \Aa/\Ia)=\left({\dim}_{\CC}(\PhiL(\Aa/\Ia))_j \right)^2= \left({\dim}_{\CC}V_j\right)^2.\]
 On the other hand, as $\Aa/\Ia \cong \CC$
     \[{\dim}_{\CC}(\Ac_j\otimes_\Aa \Aa/\Ia)={\dim}_{\CC}({\Mat}_{D(j)}(\Aa)\otimes_\Aa \CC)={\dim}_{\CC}({\Mat}_{D(j)}(\Aa)) = D(j)^2,\]
     which then concludes the proof of the statement.
 \end{proof}

\section{Presentations of mode transition algebras, zig-zag algebras, and Zhu algebras}\label{sec:Computations}
Here, we give explicit presentations for the mode transition algebras, zig-zag algebras, and higher Zhu algebras for several examples,  including the rank-$n$ Heisenberg VOA via  \cref{Cor1} in \cref{sec:Heisenberg}, and an array of rational VOAs via \cref{Cor2} in \cref{Sec:Examples}.
\subsection{Rank-$n$ Heisenberg VOA}\label{sec:Heisenberg}

This section describes all the Zhu algebras of Heisenberg VOAs of arbitrary rank. Let us introduce some combinatorial language, which will be used throughout.

In what follows, by a \emph{partition}, we mean an unordered collection of positive integers with repetition, which we denote by
\[\sigma = \{\ell_1, \ell_2, \ldots, \ell_k\}, \quad \text{ with } \quad \ell_i \in \NN_{\geq 1}.\] 
We say that $\sigma$ is a partition of $m$ if $|\sigma| \coloneq \sum_{i=1}^k \ell_i = m$. We denote by $\PRT_m$ the set of partitions of $m$ and by $\prt_m$ the cardinality of $\PRT_m$. Considering the empty partition, $\PRT_0$ is a singleton, so $\prt_0=1$.

For a positive integer $n$, we define an \emph{$n$-labeled partition} to be the data of a sequence of $n$-many (possibly empty) partitions $\sigma^1,\dots, \sigma^n$, which we denote by
\[\vec{\sigma} = (\sigma^1| \ldots| \sigma^n).\] We say that $\vec{\sigma}$ is a partition of $m$ if $\{|\sigma^1|, \ldots, |\sigma^n|\}$ is a partition of $m$ or, equivalently, if $|\vsigma| \coloneqq \sum_{i=1}^n |\sigma^i|=m$. We let $\PRT^n_m$ denote the set of $n$-labeled partition partitions of $m$ and by $\prt^n_m$ its cardinality.

With this terminology, we can state the main result of this section:

\begin{thm}\label{ThmHeisenberg}
Let $V$ be the rank-$n$ Heisenberg VOA. Then 
\[\Aa_d(V) \cong \prod_{m=0}^d\Mat_{\prt^n_m}(\Aa), \]
where $\Aa = \CC[h_1, \ldots, h_n]$.
\end{thm}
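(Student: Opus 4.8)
The plan is to apply \cref{Cor1} to the rank-$n$ Heisenberg vertex operator algebra $V$, so the task reduces to verifying that $V$ satisfies the hypotheses of that corollary and then identifying the numbers $D(j) = \dim_\CC V_j$ with $\prt^n_j$. I would proceed as follows.

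First I would record the structural facts about the rank-$n$ Heisenberg VOA $V$: it is simple, of CFT-type, $\NN$-graded, and generated in degree zero as a module over itself; its graded piece $V_j$ has a basis indexed by $n$-labeled partitions of $j$ (the monomials $a^{i_1}_{-\ell_1}\cdots a^{i_k}_{-\ell_k}\mathbf{1}$), so $\dim_\CC V_j = \prt^n_j$; and its Zhu algebra is the polynomial ring $\Aa = \CC[h_1,\dots,h_n]$, which is commutative, connected, Noetherian, and — being a polynomial ring over a field — has the property (Quillen--Suslin) that every finitely generated projective module is free. These are either standard in the literature or straightforward computations with the defining commutation relations, so I would cite/sketch them rather than grind through them.

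The substantive step is to check the remaining hypotheses of \cref{Cor1}: that for every $j \le d$, the integer $j$ is non-exceptional for $V$ and $\Ac_j = \Ac_j(V)$ is strongly unital. Non-exceptionality is immediate since $V$ itself is an $\NN$-graded $V$-module generated in degree zero with $V_j \neq 0$ for all $j$ (as $\prt^n_j \geq 1$), so condition \ref{general} of \cref{lem:V-has-d} holds. The point that genuinely requires work — and which I expect to be the main obstacle — is establishing that $\Ac_j$ is strongly unital for all $j$; this is exactly the content flagged in the introduction (``In \cref{sec:Heisenberg} we show that the mode transition algebras for VOAs derived from $d$-dimensional Heisenberg Lie algebras are strongly unital''). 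I would prove this by exhibiting an explicit strong identity element $1_j \in \Ac_j$ in the sense of \cref{def:strong-id}, i.e. an element acting as the identity on the right on $\Ac_{0,-j}$ and on the left on $\Ac_{j,0}$; concretely one produces it from a suitable degree-$j$ state in $V$ (a partition-indexed combination of the $a^i_{-1}$ or the conformal vector's iterates) and verifies the two identities using the explicit mode action on $\PhiL(\Aa)$ and $\PhiR(\Aa)$. This is where the Heisenberg-specific combinatorics enters, and it is the only non-formal input.

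Once strong unitality is in hand, \cref{Cor1} applies verbatim and yields $\Ac_j \cong \Mat_{\dim_\CC V_j}(\Aa)$ and hence $\Aa_d(V) \cong \prod_{j=0}^d \Mat_{\dim_\CC V_j}(\Aa)$; substituting $\dim_\CC V_j = \prt^n_j$ and $\Aa = \CC[h_1,\dots,h_n]$ gives the stated formula $\Aa_d(V) \cong \prod_{m=0}^d \Mat_{\prt^n_m}(\Aa)$. I would also remark that for $d = 0$ this recovers $\Aa_0(V) = \Aa$, and for $n = 1$ it recovers \cite[Corollary 7.3.1]{DGK2}, serving as a consistency check.
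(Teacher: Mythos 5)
Your proposal follows the paper's argument essentially verbatim: reduce to \cref{Cor1} by checking simplicity, generation in degree zero, non-exceptionality of every $j\le d$, the identification $\Aa\cong\CC[h_1,\dots,h_n]$ (hence commutative, connected, Noetherian, with projective modules free), and $\dim_\CC V_j=\prt^n_j$, the only substantive input being strong unitality of $\Ac_j$. The paper settles that last point exactly along the lines you sketch, by exhibiting the explicit strong identity $1_j=\sum_{\vsigma\in\PRT^n_j}\tfrac{1}{\lVert\vsigma\rVert}\,u_{\vsigma}\otimes\bar{u}_{\vsigma}\in\Ac_j$ (a sum over $n$-labeled partitions of $j$, built from modes $H_i t^{-\ell}$ for \emph{all} parts $\ell$, not just $\ell=1$) and verifying the orthogonality relations $\star(\bar{u}_{\vsigma}\MAP u_{\vtau})=\delta_{\vsigma,\vtau}\lVert\vsigma\rVert$ by reduction to the rank-one case of \cite[Proposition 7.2.1]{DGK2}.
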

\cref{ThmHeisenberg} generalizes \cite[Corollary 7.3.1]{DGK2}.  Namely, for $\pi$ the rank-$1$ Heisenberg VOA we have 
\begin{equation}\label{oldH} \Ac_j \cong \Mat_{\prt_m}(\Aa) \quad \text{and thus} \quad  \Aa_d(\pi)\cong \prod_{m=0}^d \Mat_{\prt_m}(\Aa).\end{equation} Note that, by Zhu's character formula \cite[Introduction, p. 238]{ZhuMod}, one finds that indeed $\prt_m=\dim(\pi_m)$ and so the decomposition \eqref{oldH} matches with the decomposition one would obtain using \cref{Cor1}. The formula was conjectured by Addabo and Barron, and was shown for $d=1$  in  \cite[Remark 4.2]{BVY}  and $d=2$ by \cite[Theorem 7.1]{addabbo.barron:level2Zhu}.

We will prove \cref{ThmHeisenberg} as a consequence of \cref{Cor1}. To do so, we first describe the universal enveloping algebra of $V$ and then explicitly provide strong identities in $\Ac_d$. 

In what follows, we denote by $\hc$ an abelian Lie algebra over $\CC$ of dimension $n$ with basis $H_1, \dots, H_n$ and with the symmetric bilinear form $(H_i,H_j) \coloneqq \delta_{i,j}$. The associated Heisenberg Lie algebra is \[\whc \coloneqq \hc \otimes \CC[t, t^{-1}] \oplus \CC \kk\] where the Lie bracket is given by
\[ [X f(t), Y g(t)] = (X,Y) \res(g(t)df(t)) \kk,
\] for every $X,Y \in \hc$ and $f,g \in \CC[t,t^{-1}]$, and $\kk$ is a central element. (Here and in what follows, we write $Hf(t)$ instead of $H \otimes f(t)$.)

\label{pp:dimVm} Associated to this Lie algebra, one defines the \emph{rank-$n$ Heisenberg VOA} $V=V(\hc)$, also called the rank-$n$ free boson VOA. This VOA is also identified with the $n$-fold tensor product of the rank-$1$ Heisenberg VOA $\pi$. Since $\dim(\pi_m)=\prt_m$ and the grading of $V$ is induced from the grading of $\pi$, one immediately obtains that $\dim(V_m) = \prt^n_m$. 

The universal enveloping algebra of $V$ can be described as a quotient of a completion of the universal enveloping algebra of $\whc$.
The algebra $\whc$ is naturally graded by the assignments \[\deg(X t^a)=-a \quad \text{and} \quad \deg(\kk)=0,\] so that also the universal enveloping algebra $U(\whc)$ is graded. Denote by $U(\whc)_{\kk=1}$ the quotient of $U(\whc)$ by the identification $\kk=1$, and observe that this is still a graded algebra.   We then define 
\[ \UV= \UV(\hc) \coloneqq \varprojlim \dfrac{U(\whc)_{\kk=1}}{U(\whc)_{\kk=1}  (\hc \otimes t^N \CC[t])},
\] which again is an algebra graded over $\ZZ$, with homogeneous graded piece denoted $\UV_d$. In analogy to \cite[Lemma 4.3.2]{bzf}, one obtains the following identification (see also the comments in \cite[\S 5.1.8]{bzf}).

\begin{lem} \label{lem:Completion} There is an isomorphism of graded algebras between the universal enveloping algebra of $V$ and $\UV$. 
\end{lem}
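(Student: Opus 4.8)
The plan is to follow verbatim the argument of \cite[Lemma 4.3.2]{bzf} (see also \cite[\S 5.1.8]{bzf}), which establishes the rank-$1$ case but uses nothing special about $n=1$. Write $\Uu(V)$ for the universal enveloping algebra of $V$ in the sense of \cref{sec:Background} (recalled there from \cite{DGK2}); the claim is that $\Uu(V)\cong\UV$ as $\ZZ$-graded algebras. Recall that $\Uu(V)$ is assembled from the Lie algebra of Fourier coefficients $U(V)=\bigl(V\otimes\Cl{t}\bigr)/\im(\partial)$, with $\partial=T\otimes 1+\id\otimes d/dt$, as the completion of its universal enveloping algebra with respect to the topology for which every admissible $V$-module becomes a smooth module.

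First I would construct the comparison map. The weight-one fields $H_i(z)$, $i=1,\dots,n$, strongly generate $V=V(\hc)$, and since the only singular term in the operator product expansion of $H_i(z)H_j(w)$ is $\delta_{i,j}(z-w)^{-2}$, the commutator formula shows that in $U(V)$ the Fourier coefficients $(H_i)_{(a)}$ satisfy $\[ [(H_i)_{(a)},(H_j)_{(b)}]=a\,\delta_{i,j}\,\delta_{a+b,0}\cdot 1, \]$ which are precisely the defining relations of $\whc$ with $\kk$ sent to the unit of $\Uu(V)$. This yields a Lie algebra homomorphism $\whc\to U(V)$, $H_it^a\mapsto(H_i)_{(a)}$, $\kk\mapsto 1$, hence an associative algebra homomorphism $U(\whc)_{\kk=1}\to\Uu(V)$; it is graded, since the $a$-th mode of a weight-one state has degree $-a=\deg(H_it^a)$.

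Next I would prove surjectivity. Because $V$ is strongly generated by the $H_i$, every state of $V$ is a normally ordered polynomial expression in the $H_i$ and their derivatives, and the associativity (Borcherds) identity rewrites each of its Fourier coefficients as a locally finite normally ordered sum of products of the $(H_i)_{(a)}$; hence the image of $U(\whc)_{\kk=1}$ is dense in $\Uu(V)$. One then checks that the defining topology of $\Uu(V)$ restricts, along this dense inclusion, to the topology on $U(\whc)_{\kk=1}$ given by the left ideals $U(\whc)_{\kk=1}(\hc\otimes t^N\CC[t])$ --- exactly the modes that annihilate a fixed vector of an admissible module once $N\gg0$ --- so that on completions one obtains a graded surjection $\UV\twoheadrightarrow\Uu(V)$.

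The hard part will be injectivity: one must show that $V$ imposes no relations beyond those of $\whc$ together with the smoothness completion, i.e. that the Borcherds identities for composite states hold automatically in the completed $U(\whc)_{\kk=1}$. Following \cite{bzf}, I would deduce this from a faithfulness argument. The collection of highest-weight Fock admissible $V$-modules coincides with the collection of the corresponding smooth level-$1$ $\whc$-modules, the comparison map intertwines the two actions on this collection, and the completion $\UV$ acts faithfully on it --- already on the vacuum module $V$ the negative modes $(H_i)_{(-a)}$, $a>0$, act freely on the descendants of the vacuum. Since the $\UV$-action on this collection factors through $\Uu(V)$ via the comparison map, faithfulness of the $\UV$-action forces the comparison map to be injective; together with the surjection above this gives the isomorphism. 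As nothing in the proof of \cite[Lemma 4.3.2]{bzf} uses $n=1$ (alternatively, as $V(\hc)\cong\pi^{\otimes n}$ and both constructions respect tensor products of vertex operator algebras), the rank-$1$ argument transfers to arbitrary $n$ with only bookkeeping changes, and compatibility with the $\ZZ$-gradings is then routine.
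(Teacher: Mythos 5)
Your proposal takes exactly the route the paper does: the paper gives no independent argument for this lemma, asserting the identification ``in analogy to'' \cite[Lemma 4.3.2]{bzf} (and the comments in \cite[\S 5.1.8]{bzf}), which is precisely the comparison-map/density/faithfulness argument you reconstruct and adapt to rank $n$. One small caveat: faithfulness does not hold ``already on the vacuum module'' (the zero modes $(H_i)_{(0)}$ act by zero there), so the justification must rest, as your main sentence correctly states, on the whole family of Fock modules separating points of the completion.
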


In particular, every element of $\UV_d$ is represented by (convergent) sums of strings of the form 
\begin{equation} \label{eq:PBW} (X_1 t^{-\ell_1}) \cdot (X_2 t^{-\ell_2}) \cdot \cdots \cdot  (X_r t^{-\ell_r}) \end{equation}
with $X_i \in \hc$ and with $\sum_{i=1}^n \ell_i = d$. Using the standard PBW-style conventions without loss of generality, we may also assume $\ell_1 \geq \ell_2 \geq \cdots \geq \ell_r$. 

Recall that $\Aa_0$ is described as the quotient of $\UV_0$ by elements of $\UV \cdot \UV_{\leq -1}$ of degree zero (see e.g.~\cite{He}). The above description of $\UV_d$ specialized to the case $d=0$ allows us to explicitly describe $\Aa=\Aa_0(V)$ as the polynomial ring in $n$-variables:

\begin{lem} \label{lem:A0V} The map $\CC[h_1, \dots, h_n] \to \Aa$ given by the assignment $h_1 \mapsto H_1$ is an isomorphism of associative algebras. 
\end{lem}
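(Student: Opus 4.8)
The plan is to exhibit an explicit isomorphism from the polynomial ring $\CC[h_1,\dots,h_n]$ onto $\Aa = \Aa_0(V)$ by sending $h_i$ to the class of $H_i \coloneqq H_i t^{-1}$ in $\UV_0/(\UV\cdot\UV_{\leq -1})_0$, and then verifying it is a well-defined algebra map, surjective, and injective. First I would check that the assignment extends to a ring homomorphism: since the $H_i t^{-1}$ pairwise commute modulo $\UV\cdot\UV_{\leq -1}$ (their commutator lies in $\CC\kk$, which becomes a scalar, but more to the point any reordering changes the element by something in $\UV_{\leq -1}\cdot\UV_{\geq 1}$ which lies in the relevant ideal), the universal property of the polynomial ring gives a map $\CC[h_1,\dots,h_n]\to\Aa$. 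For surjectivity I would use the PBW description of $\UV_0$ from \cref{lem:Completion}: by \eqref{eq:PBW}, every degree-zero element is a convergent sum of strings $(X_1 t^{-\ell_1})\cdots(X_r t^{-\ell_r})$ with $\sum \ell_i = 0$, and since each $\ell_i \geq 0$ is impossible unless we allow negative exponents — rather, one writes such a string in normal-ordered form and observes that any string containing a factor $X t^{-\ell}$ with $\ell < 0$ (a positive mode, i.e.\ an annihilation operator $X t^{\ell}$ with $\ell\geq 1$) can be moved to the right and lies in $\UV\cdot\UV_{\leq -1}$, hence is zero in $\Aa$; what survives modulo the ideal are exactly the products of the $H_i t^{-1}$, which are in the image.

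The heart of the argument — and the step I expect to be the main obstacle — is injectivity, i.e.\ showing that no nontrivial polynomial relation among the classes $[H_i t^{-1}]$ holds in $\Aa$. The clean way to do this is representation-theoretic: the rank-$n$ Heisenberg VOA $V$ is itself a simple admissible $V$-module generated in degree zero with $V_0 = \CC$, and its degree-zero component is a module over $\Aa$; more usefully, the Fock modules $F_\lambda$ for $\lambda = (\lambda_1,\dots,\lambda_n)\in\CC^n$ are simple admissible $V$-modules whose degree-zero pieces are one-dimensional, on which $H_i t^{-1}$ (acting as the appropriate zero-mode operator $o(H_i)$ in Zhu's sense) acts by the scalar $\lambda_i$. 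This produces, for every $\lambda\in\CC^n$, an algebra homomorphism $\Aa\to\CC$ sending $[H_i t^{-1}]\mapsto\lambda_i$; precomposing with $\CC[h_1,\dots,h_n]\to\Aa$ gives evaluation at $\lambda$. Since a polynomial vanishing under evaluation at every point of $\CC^n$ is zero, the composite $\CC[h_1,\dots,h_n]\to\Aa$ is injective, hence so is the first map. Combined with surjectivity this gives the isomorphism.

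Alternatively, and perhaps more in keeping with the self-contained computational style of this section, one can prove injectivity directly by constructing a linear section: exhibit for each monomial $h_1^{a_1}\cdots h_n^{a_n}$ a linear functional on $\UV_0$ that vanishes on $\UV\cdot\UV_{\leq -1}$ and detects precisely that monomial's coefficient, e.g.\ by pairing against the appropriate vector in a Fock space, or by using the known PBW basis of the vacuum Fock module to see that the normal-ordered monomials $(H_1 t^{-1})^{a_1}\cdots(H_n t^{-1})^{a_n}$ are linearly independent modulo the ideal. I would present the representation-theoretic route as the main line since it is shortest, and perhaps remark that it also identifies the spectrum of $\Aa$ with $\CC^n$, consistent with the known classification of simple modules over the Heisenberg VOA being parametrized by $\hc^* \cong \CC^n$. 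The one genuine technical point to be careful about is the completion: one must confirm that passing to the inverse limit $\varprojlim U(\whc)_{\kk=1}/U(\whc)_{\kk=1}(\hc\otimes t^N\CC[t])$ does not enlarge the degree-zero quotient beyond the polynomial ring — but this is immediate since the degree-zero part of the defining ideal $U(\whc)_{\kk=1}(\hc\otimes t^N\CC[t])$ stabilizes for $N\geq 1$ as far as images in $\Aa$ are concerned, any such element already lying in $\UV\cdot\UV_{\leq-1}$.
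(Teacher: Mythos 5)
Your overall strategy---well-definedness from commutativity, surjectivity from the PBW/normal-ordering description of $\UV_0$, injectivity by evaluating on Fock modules---is sound, and is in fact more detailed than the paper, which offers no written proof and treats the lemma as an immediate consequence of the description \eqref{eq:PBW} of $\UV_d$ specialized to $d=0$. However, there is a concrete error running through your write-up: you send $h_i$ to the class of $H_i t^{-1}$ and assert that this lies in $\UV_0/(\UV\cdot\UV_{\leq -1})_0$. In the paper's grading one has $\deg(Xt^a)=-a$, so $H_i t^{-1}$ has degree $+1$; it lies in $\UV_1$, not $\UV_0$, and its class is not an element of $\Aa$ at all. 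The element the lemma denotes $H_i$ is the zero mode $H_i t^0$ (the image of $\hc\subset\whc$ inside $\UV_0$), equivalently the zero mode $o(H_i(-1)\mathbf{1})$ of the weight-one state: you have conflated the state $H_i(-1)\mathbf{1}\in V_1$, which generates the Zhu algebra in the $V/O(V)$ picture, with its representative in the $\UV_0$ picture. The same slip infects your surjectivity step: after normal ordering, what survives modulo $(\UV\cdot\UV_{\leq-1})_0$ are the monomials in the zero modes $H_i t^0$; a degree count shows that a product of elements $H_i t^{-1}$ is never of degree zero, so ``products of the $H_i t^{-1}$'' is false as written.

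Once the generator is corrected to $H_i t^0$, your argument goes through essentially verbatim, and becomes cleaner: the zero modes commute exactly in $\UV_0$ (the cocycle $a(X,Y)\delta_{a+b,0}\kk$ vanishes for $a=b=0$), so well-definedness requires no passage to the quotient; surjectivity follows since every degree-zero normal-ordered monomial containing an annihilation mode lies in the left ideal $\UV\cdot\UV_{\leq-1}$; and injectivity follows from your Fock-module argument, since $H_i t^0$ acts on the degree-zero vector of $F_\lambda$ by the scalar $\lambda_i$, producing evaluation maps at every point of $\CC^n$. One further caution: your parenthetical claim that a reordering changes an element by something in $\UV_{\leq-1}\cdot\UV_{\geq 1}$ ``which lies in the relevant ideal'' is unjustified (that product is not obviously contained in the left ideal $\UV\cdot\UV_{\leq-1}$); fortunately it is not needed, since the relevant commutators are central scalars or zero. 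Your treatment of the completion is fine and is the kind of point worth making explicit.
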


We now show that $\Ac_d$ is strongly unital, so it is convenient to use the language of partitions already introduced. First of all, recall from \cite{DGK2} that 
\[ \Ac_d= \PhiL(\Aa)_d \otimes_\Aa \PhiR(\Aa)_{-d},
\]where 
\[\PhiL(\Aa)_d=\dfrac{\UV_d}{(\UV \cdot \UV_{\leq -1})_d} \Otimes{\UV_0} \Aa \qquad \text{and} \qquad \PhiR(\Aa)_{d}= \Aa \Otimes{\UV_0}  \dfrac{\UV_{-d}}{(\UV_{\geq 1} \cdot \UV)_{-d}}.\]
We now construct the maps
\begin{equation} \label{eq:PRTPhiLd}\PhiR(\Aa)_{-d} \leftarrow \PRT^n_d \rightarrow \PhiL(A)_d, \quad \bar{u}_{\vsigma} \mapsfrom \vsigma \mapsto u_{\vsigma},
\end{equation} which allows us to combinatorially express the elements of $\Ac_d$. Let $\vsigma=(\sigma^1|\dots|\sigma^n) \in \PRT^n_d$, so that the $i$-th partition $\sigma^i$ is given by $\{\ell^i_1, \dots, \ell^i_{k_i}\}$. Associated to $\sigma^i$ we consider the elements 
\[u_{\sigma^i} \coloneqq  H_i t^{-\ell^i_1} \cdots H_i t^{-\ell^i_{k_i}} \quad \text{and} \quad \bar{u}_{\sigma^i} \coloneqq H_i t^{\ell^i_1} \cdots H_i t^{\ell^i_{k_i}}.
\] Note that since terms of negative degree commute with each other (and terms of positive degree commute with each other), the order of the above expressions doesn't matter, so the assignments $\bar{u}_{\sigma^i} \mapsfrom \sigma^i \mapsto u_{\sigma^i}$ are well defined. It follows that \eqref{eq:PRTPhiLd} can be defined by defining the elements
\[ u_{\vsigma} \coloneqq u_{\sigma^1} \cdots u_{\sigma^n} \otimes 1 \in \PhiL(\Aa)_d,
\quad \text{and} \quad \bar{u}_{\vsigma} \coloneqq 1 \otimes \bar{u}_{\sigma^1} \cdots \bar{u}_{\sigma^n} \in \PhiR(\Aa)_{-d}.
\]
Using this notation and \eqref{eq:PBW}, one can show that
\[\PhiL(\Aa)_d = \left\langle u_\sigma \otimes a \mid \sigma \in \PRT^n_d, \ a \in \Aa \right\rangle, \quad \text{and}\quad \PhiR(\Aa)_{-d} = \left\langle a \otimes \bar{u}_{\tau}  \mid \sigma \in \PRT^n_d, \ a \in \Aa \right\rangle
\] It is useful to rescale the elements above by the positive integer  $||\vsigma|| \coloneqq \prod_{i=1}^n ||\sigma^i||$ where $||\sigma^i||$ is defined as in \cite[Example 7.2.2]{DGK2}.

\begin{prop}\label{prop:strong-identity}
$1_d \coloneqq \sum_{\sigma \in \PRT^n_d} \frac{1}{||\vsigma||} u_{\vsigma} \otimes \bar{u}_{\vsigma} \in \Ac_d$ is the strong identity element of $\Ac_d$.
\end{prop}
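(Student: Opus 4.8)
The plan is to verify the two defining conditions of \cref{def:strong-id} directly, namely that $1_d \star \bar{u}_{\vtau} = \bar{u}_{\vtau}$ for all $\vtau \in \PRT^n_d$ (and hence for a spanning set of $\Ac_{0,-d}$ via $\Aa$-linearity), and symmetrically $u_{\vtau} \star 1_d = u_{\vtau}$ for all $\vtau$. Since the rank-$n$ Heisenberg algebra $V$ is the $n$-fold tensor product of the rank-$1$ algebra $\pi$, and the proposed $1_d = \sum_{\vsigma} \frac{1}{||\vsigma||} u_{\vsigma} \otimes \bar{u}_{\vsigma}$ factors as a sum of products over the $n$ slots, I expect the computation to reduce, via a Künneth-type bookkeeping, to the corresponding rank-$1$ statement, which is essentially \cite[Example 7.2.2]{DGK2}; the normalization constants $||\sigma^i||$ and the product $||\vsigma|| = \prod_i ||\sigma^i||$ are precisely chosen to make this reduction exact.

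First I would recall (or re-derive) the formula for the $\star$-pairing $\Ac_{0,-d} \otimes_{\Ac_d} \Ac_{d,0} \to \Aa$ in the Heisenberg case: by \cref{rmk:rmks} and \eqref{eq:star-zero}, computing $\bar{u}_{\vtau} \star (u_{\vsigma} \otimes \bar{u}_{\vsigma}) \star u_{\vtau'}$-type expressions amounts to moving positive-degree modes past negative-degree modes in $\UV$ using the Heisenberg commutation relations $[H_i t^a, H_j t^b] = \delta_{i,j}\, a\, \delta_{a+b,0}\, \kk$ with $\kk = 1$, and then projecting modulo $\UV \cdot \UV_{\le -1}$ (resp. $\UV_{\ge 1}\cdot \UV$). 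Concretely, the pairing of $\bar{u}_{\vsigma}$ against $u_{\vtau}$ is nonzero only when $\vsigma$ and $\vtau$ are the same labeled partition, and in that case it evaluates to a combinatorial constant which I will identify with $||\vsigma||$ (this is the content of the rank-$1$ calculation in \cite[Example 7.2.2]{DGK2}, applied slotwise and multiplied together). This orthogonality is the crux: it guarantees that in $1_d \star \bar{u}_{\vtau} = \sum_{\vsigma} \frac{1}{||\vsigma||} u_{\vsigma} \otimes (\bar{u}_{\vsigma} \star \bar{u}_{\vtau})$ — interpreted correctly inside $\Ac_{0,-d}$ as $\sum_{\vsigma}\frac{1}{||\vsigma||}(u_{\vsigma}\star \bar u_{\vsigma})\starout \bar u_{\vtau}$ using the structure of the pairing — only the $\vsigma = \vtau$ term survives, contributing $\frac{1}{||\vtau||}\cdot ||\vtau|| \cdot \bar{u}_{\vtau} = \bar{u}_{\vtau}$.

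The main obstacle I anticipate is purely organizational rather than conceptual: making the slotwise-to-global reduction rigorous requires care because the PBW strings $u_{\vsigma} = u_{\sigma^1}\cdots u_{\sigma^n}$ interleave modes from different slots, and one must check that modes $H_i t^a$ and $H_j t^b$ with $i \neq j$ genuinely commute (they do, since $(H_i, H_j) = \delta_{i,j}$, so the bracket vanishes), so that the $n$-slot pairing is literally the product of $n$ rank-$1$ pairings with no cross terms. Once that is established, the sum $\sum_{\vsigma \in \PRT^n_d}$ splits as a product $\prod_{i=1}^n \big(\sum_{\sigma^i}\big)$ over the constraint $\sum_i |\sigma^i| = d$, the normalization $||\vsigma|| = \prod_i ||\sigma^i||$ distributes accordingly, and each factor reduces to the rank-$1$ strong-identity identity. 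I would then note the second condition $u_{\vtau}\star 1_d = u_{\vtau}$ follows by the evident left-right symmetry of the construction (swapping $t \leftrightarrow t^{-1}$, i.e. $\PhiL \leftrightarrow \PhiR$), so no separate argument is needed. Finally, by \cref{lem:V-has-d} and the nonvanishing $\PhiL(\Aa)_d \neq 0$ (which holds since $\dim V_d = \prt^n_d > 0$ for all $d$), $\Ac_d \neq 0$, so $1_d$ is a genuine (nonzero) strong identity, completing the proof.
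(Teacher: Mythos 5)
Your proposal is correct and takes essentially the same route as the paper: reduce the strong-identity property to the orthogonality relation $\star(\bar{u}_{\vsigma} \MAP u_{\vtau}) = \delta_{\vsigma,\vtau}\, ||\vsigma||$, use that modes from distinct slots commute (since $(H_i,H_j)=0$ for $i \neq j$) to reduce to the rank-$1$ computation of \cite[Proposition 7.2.1, Example 7.2.2]{DGK2}, and dispose of the second condition by left--right symmetry. The only point where the paper is slightly more careful is the off-diagonal case with $|\sigma^j| \neq |\tau^j|$, where the slot factor $\bar{u}_{\sigma^j} u_{\tau^j}$ is not itself a degree-zero (rank-$1$) pairing and one instead kills the whole product in $\Aa$ by moving the nonzero-degree factor to an end --- which is exactly the mechanism you describe, so this is a matter of phrasing rather than a gap.
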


\begin{proof} By symmetry of the argument, it is enough to show that $1_d$ acts as the identity on $\Ac_{d,0}=\PhiL(\Aa)_d$. We are then left to show that $1_d \star (u_{\vtau} \otimes a) = u_{\vtau} \otimes a$ for every $\vtau \in \PRT^n_d$.
Using the notation as in \cref{Morita}, this is equivalent to showing that 
\[ \star(\bar{u}_{\vsigma} \MAP u_{\vtau}) = \begin{cases} ||\vsigma|| & \text{ if } \vtau=\vsigma\\
0 & \text{ otherwise.} \end{cases}
\]
More concretely, the element  $\star(\bar{u}_{\vsigma} \MAP u_{\vtau})$ coincides with the image of the element
\[u \coloneqq  \bar{u}_{\sigma^1} \cdots \bar{u}_{\sigma^n} \cdot {u}_{\tau^1} \cdots {u}_{\tau^n} \in \UV_0\]
in $\Aa$. Note that since $(H_i,H_j)=0$ if $i \neq j$, then $\bar{u}_{\sigma^i}$ commutes with  $\bar{u}_{\sigma^j}$ and $u_{\tau^j}$. Thus, $u$ is identified with the product of the elements 
\[u_i \coloneqq \bar{u}_{\sigma^i}\cdot u_{\tau^i}
\] in any order. If $\vsigma \neq \vtau$ then there exist a $j \in \{1,\dots,n\}$ such that $\sigma^j \neq \tau^j$. If $|\sigma^j|\neq|\tau^j|$, then $\deg(u_j) \neq 0$, and so, by moving $u_j$ either at the end or at the beginning of the product defining $u$, one shows that $u$ maps to zero in $\Aa$. We can then assume that $|\sigma^j|=|\tau^j|$ and, in particular, that neither of them is the only partition of zero. Since they are different, it means that there exist positive integers $\ell \neq r$ such that we can write $\sigma=\{\sigma',\ell\}$ and $\tau^j=\{\tau', r\}$ for some smaller, possibly empty, partitions $\sigma'$ and $\tau'$. It follows that
\[u_j = \bar{u}_{\sigma'} \cdot H t^{\ell} \cdot Ht^{-r} \cdot u_{\tau'} = \bar{u}_{\sigma'} \cdot Ht^{-r} \cdot H t^{\ell} \cdot {u}_{\tau'}= Ht^{-r} \cdot \bar{u}_{\sigma'} \cdot {u}_{\tau'} \cdot H t^{\ell}, 
\] which again maps to the zero element in $\Aa$. We are then left to show that when $\vsigma=\vtau$, the image of $u$ coincides with $||\vsigma||$. For this purpose, it is enough to show that the image of $u$ in $\Aa$ is equal to the image of the element
\[||\sigma^1|| \left(\bar{u}_{\sigma^2} \cdots \bar{u}_{\sigma^n}\cdot {u}_{\sigma^2} \cdots {u}_{\sigma^n}\right) .
\] This reduces the statement to the rank-$1$ Heisenberg VOA, which follows from \cite[Proposition 7.2.1 and Example 7.2.2]{DGK2}.  \end{proof}

We are ready to complete the proof of \cref{ThmHeisenberg}.

\begin{proof}[Proof of \cref{ThmHeisenberg}] The rank-$n$ Heisenberg VOA $V$ is simple and, by construction, generated in degree zero, and since  \cref{pp:dimVm}, $\dim(V_d)=\prt^n_d$, graded components are nonzero. In particular, every natural number $d$ is non-exceptional for $V$. Moreover,  \cref{lem:A0V} ensures that $\Aa$ is commutative and connected, and that every projective $\Aa$-module is free. Finally, by \cref{prop:strong-identity}, $\Ac_d$ is strongly unital for every $d$. The result thus follows from \cref{Cor1}.
\end{proof}

\subsection{The Virasoro VOA}\label{ExVir} We describe here the degree 1 zig-zag algebra $\Ba_1$ for non-rational Virasoro VOAs. For $c \in\CC$, let $V=M_c= {M_{c,0}}/{<L_{-1}1>}$ be the (not necessarily simple) Virasoro VOA of central charge $c$ (see \cite{WWang}). In \cite[Proposition 8.1.1]{DGK2}, we explicitly realized $\Ac_1(V)$ as an ideal of $\Aa_1$ and showed that it is not (strongly) unital. Here, instead, we explicitly compute $\Bc_1$ and $\Ba_1$.  By \cite{WWang} there is an isomorphism $\Aa_0=\CC[t]$. 

\begin{prop} The $\CC[t]$-algebra $\Bc_1$ is generated by one element $u$ satisfying $u^2=2ut$. The ideal $\Ba_1$ of $\CC[t]$ is isomorphic to $t \CC[t]$. 
\end{prop}

\begin{proof} We refer to \cite[Proof of Proposition 8.1.1]{DGK2} for the notation used throughout.  From \cite[Proof of Proposition 8.1.1]{DGK2} we know that $\Ac_{1,-1}=u_1 \Aa_0 u_{-1}$ and similarly, we obtain that $\Ac_{0,-1}=\Aa_0 u_{-1}$ and $\Ac_{1,0}=u_1 \Aa_0$. It follows that $\Bc_1$ is generated over $\CC[t]$ by $u=u_{-1} \otimes u_{1}$. The product structure is given by
\[ u \MAP u= u_{-1} \otimes u_{1}*u_{-1}*u_{1} = u (u_{-1}*u_{1}),
\] so that to conclude, it is enough to show that the image of $u_{-1}*u_{1}$ in $\Aa=\CC[t]$ is equal to $2t$. By the commutator formula, as in the proof of \cite[Proposition 8.1.1, (a)]{DGK2} we indeed obtain 
\[ u_{-1}*u_{1}=u_{1}*u_{-1} + [u_{-1},u_{1}] = 0+ [u_{-1},u_{1}] = 2t.\]
This shows also that $\Ba_1$ is the ideal of $\CC[t]$ generated by $2t$, or equivalently, generated by $t$.
\end{proof}

\subsection{Lattice VOAs}\label{Sec:Examples}
Let $V_L$ be the VOA associated to an even lattice $L$ of rank $d$ and positive definite quadratic form $Q=q(\, , \,)/2$. Then $V_L$ is rational of conformal dimension $d$, and the set of simple $V_L$-modules is in bijection with $L'/L$, where $L'\in L \otimes \QQ$ denotes the dual lattice of $L$ (with respect to $q$). We denote by $V^\lambda$ the simple module associated with $[\lambda] \in L'/L$ and recall that its conformal dimension is $a_{\lambda}=\frac{1}{2}{\rm{min}_{e\in L}}q(\lambda+e,\lambda+e)$. Thus, to complete the description of $\Aa_d$ using \cref{Cor2}, one only needs to compute the dimensions $D^{\lambda}_j=\dim(V^\lambda_j)$. As demonstrated in \cite{DG}, the integers $D^{\lambda}_j$ can be computed via Zhu's character formula:
\begin{equation*}\label{eq:ZMC2}
\sum_{n\ge 0}^{\infty}{\dim}V^{\lambda}_{n}\, q^n  
=\sum_{n\in\QQ_{\geq 0}}\left(\sum_{\stackrel{n_1,n_2,\ldots,n_d \in \NN}{j\in\QQ_{\geq 0},\sum_{i=1}^d n_i+j=n}} |L^\lambda_j|\prod_{i=1}^d \prt_{n_i} \right)q^{n-a_{\lambda}},\nonumber
\end{equation*}
for $L_j^{\lambda}\coloneqq \{\alpha \in L \ |\   Q(\alpha + \lambda) = j\}$, and where $\prt_n$ denotes the number of partitions of $n$.

For instance, as in \cite{DG}, the coefficient of $q^0$  on the right hand side, which is $D^\lambda_0$,  is equal to the number of ways to write 
$a_\lambda=n_1+n_2+\cdots +n_d+j, \ \mbox{ with }  \ n_i \in \ZZ_{\geq 0}, \ \mbox{ and } j\in\QQ_{\geq 0}$,
and for each such way, the contribution is given by $|L_m^{\lambda}|\prod_i\prt_{n_i}$. 
For $V^0=V_L$ one has $a_0=0$ and so $\dim((V_L)_0)=1$. The dimension of $(V_L)_i$ for $i \geq 1$ depends on the quadratic form $Q$. For instance, one has that $\dim((V_L)_1)=d + N$ where $N$ is the number of elements of $L$ such that $Q(\alpha)=1$.

\begin{ex} For the case from \cite[Example 9.2.1]{DG}: $L=e\ZZ$, with pairing given by $q(e,e)=8$,  simple modules are indexed by $\lambda \in L'/L\cong \ZZ/8\ZZ=\{0, \dots, 7\}$, with conformal dimensions:
\[\begin{array}{c|cccccccc}
    \lambda &  0 & 1 & 2 & 3 & 4 & 5 &6 & 7\\ \hline
     a_\lambda & 0 & \frac{1}{16} & \frac{1}{4} & \frac{9}{16} & 1& \frac{9}{16} & \frac{1}{4}& \frac{1}{16}
\end{array}
\]
When $\lambda \neq 0 \mod 4$, then we have that there is only one way to write $a_\lambda = N+j$ with $N \in \ZZ_{\geq 0}$, and $j \in \QQ_{\geq 0}$, which is to set $N=0$ and $j=a_\lambda$. Thus, for these modules, one obtains $D^\lambda_0= 1$. When $\lambda =4$, instead one obtains $D^4_0=2$ because $|L^4_1|=\{0, -e\}$. \end{ex}

\bibliographystyle{alpha}
\bibliography{biblio}

\end{document}